\def\Ddots{\mathinner{\mkern1mu\raise\p@
\vbox{\kern7\p@\hbox{.}}\mkern2mu
\raise4\p@\hbox{.}\mkern2mu\raise7\p@\hbox{.}\mkern1mu}}
\newtheorem{theorem}{Theorem}[section]
\newtheorem{corollary}{Corollary}[section]
\newtheorem{lemma}{Lemma}[section]
\newtheorem{proposition}{Proposition}[section]
\newtheorem{definition}{Definition}[section]
\newtheorem{example}{Example}[section]
\begin{document}

\title{On the Additivity of $n$-Multiplicative Isomorphisms, Derivations, and Related Maps in Axial Algebras}

\thanks{The first author was supported by FAPESP number 2022/14579-0. The second author was partially supported by FAPESP number 2018/23690-6}

\author[Daniel Eiti Nishida Kawai and Henrique Guzzo Jr.]{Daniel Eiti Nishida Kawai and Henrique Guzzo Jr.}
\address{Daniel Eiti Nishida Kawai and Henrique Guzzo Jr., S\~{a}o Paulo University, 
Rua Mat\~{a}o, 1010, 
CEP 05508-090, S\~{a}o Paulo, Brazil\\
{\em E-mail}: {\tt daniel.kawai@usp.br, guzzo@ime.usp.br}}{}

\author[Bruno Leonardo Macedo Ferreira]{Bruno Leonardo Macedo Ferreira}
\address{Bruno Leonardo Macedo Ferreira, Federal University of Technology-Paran\'{a}-Brazil and Federal University of ABC, 
Avenida Professora Laura Pacheco Bastos, 800, 
85053-510, Guarapuava, Brazil and Av. dos Estados, 5001 - Bangú, Santo André - SP, 09280-560\\\linebreak
{\em E-mail}: {\tt brunoferreira@utfpr.edu.br } {\em  or } {\tt brunolmfalg@gmail.com}}

\begin{abstract}
In this paper, we demonstrate that several classes of functions, specifically $n$-multiplicative isomorphisms, derivations, elementary maps, and Jordan elementary maps on a class of algebras that includes Jordan algebras with idempotents, $\mathcal{J}(\alpha)$-axial algebras and $\mathcal{M}(\alpha, \beta)$-axial algebras, are additive under appropriate conditions, which may be referred to as Martindale-type conditions. Furthermore, we answer the question left open in the recent article titled ``Multiplicative isomorphisms and derivations on axial algebra."

\vspace*{.1cm}

\noindent{\it Keywords}: multiplicative maps, additivity, axial algebras, fusion rule, axes.

\vspace*{.1cm}

\noindent{\it 2020 MSC}: Primary: 17A36, 17C27; Secondary: 17A01, 17D99.
\end{abstract}

\maketitle

\section{Introduction}

The study of the additivity of multiplicative maps is rooted in the broader investigation of the structure and behavior of algebraic systems, particularly rings and algebras. The fundamental motivation lies in understanding when multiplicative isomorphisms, which naturally preserve the multiplicative structure of these systems, also preserve their additive structure. This question is not merely of academic interest; it has profound implications, particularly in the classification of algebraic structures, the development of functional analysis, and the stability of algebraic invariants. For instance, understanding additivity is crucial in determining when ring isomorphisms can be extended to module isomorphisms, which has direct consequences for the representation theory of algebras. Additionally, in the context of operator algebras, ensuring that multiplicative maps are also additive can be essential in preserving the norm structure and continuity, which are key in applications to quantum mechanics and other areas of mathematical physics. These implications highlight the significance of the problem, as they impact both theoretical advancements and practical applications in various branches of mathematics.

One key motivation comes from the desire to generalize and deepen existing results in ring theory, as seen in Martindale III's 1969 work \cite{18}. He aimed to extend Rickart's theorem \cite{21} on the conditions under which a multiplicative isomorphism is also additive, while simultaneously removing certain minimality conditions that were previously assumed. The removal of these conditions is significant because it broadens the applicability of the results, making them relevant to a wider class of rings. By doing so, Martindale III was able to offer a more comprehensive understanding of the relationship between multiplicative and additive structures in algebraic systems, thus contributing to the foundational knowledge in ring theory.

The study of the additivity of functions, particularly in the context of ring theory, holds significant importance, as evidenced by Johnson's exploration of rings with unique addition, see \cite{17}. The concept of unique addition is fundamental because it establishes a clear and definitive structure for the ring in question, where the addition operation cannot be altered without fundamentally changing the ring itself. This uniqueness has far-reaching implications for the stability and classification of algebraic structures.

When a ring  $R$ possesses a unique addition, any bijective (1-1) multiplicative mapping from $ R$  onto another ring  $ S $ must also be additive. This property is crucial because it guarantees that the multiplicative and additive structures of the ring are inherently linked, preventing the existence of alternative additive operations that could disrupt the ring's algebraic integrity. In contrast, if a ring does not have a unique addition, it becomes possible to define a new addition operation that transforms the ring into a different algebraic entity, even while preserving the multiplicative structure. This potential for altering the ring’s structure through non-additive multiplicative mappings highlights the delicate balance between multiplication and addition in algebraic systems.

Therefore, the importance of studying additivity lies in its role in preserving the fundamental properties of algebraic structures. In applications, this preservation ensures that mathematical models remain consistent and reliable. For example, in functional analysis and operator algebras, the additivity of mappings is crucial for maintaining the continuity and norm-preserving properties of linear operators, which are essential in various areas such as quantum mechanics. Moreover, the unique addition property can simplify the classification of rings by ruling out the existence of alternative, non-equivalent additive structures, thereby streamlining the study of ring isomorphisms and module theory.

Furthermore, Rickart's earlier work in 1948 explored the automatic additivity of certain mappings under specific conditions. His investigation into Boolean rings and rings with minimal right ideals demonstrates the importance of these structures in understanding the behavior of multiplicative mappings. The fact that additivity can be an automatic consequence of certain structural properties of the ring suggests a deep interplay between the ring's architecture and the behavior of mappings defined on it. This insight is crucial because it hints at underlying algebraic principles that govern the interaction between different operations within these structures.

The study of additivity is also motivated by its implications for the theory of linear operators on Banach spaces, as seen in Eidelheit's work from 1940. Eidelheit's exploration of isomorphisms between rings of linear operators highlights the importance of understanding when these isomorphisms preserve not only the multiplicative structure but also the additive structure of the operators. Since linear operators are central to functional analysis and related fields, ensuring that their algebraic isomorphisms respect both addition and multiplication is essential for maintaining the integrity of the mathematical models used in these disciplines.

The historical context provided by Mazur’s work \cite{Sur-les-anneaux-linéaries-1938} on linear rings further emphasizes the significance of these studies. The Gelfand-Mazur theorem, which underpins much of the theory of commutative Banach algebras, illustrates how foundational results in algebra can have far-reaching consequences. Mazur's announcement of this theorem in 1938 and its subsequent proof by Gelfand demonstrate the importance of understanding the basic algebraic structures and the mappings that preserve them. The study of additivity in multiplicative maps can be seen as a continuation of this tradition, seeking to uncover the fundamental properties that govern algebraic systems.

Concisely, Martindale's III question became an active research area in associative ring theory, and his pioneering brought focus to the study of when a multiplicative isomorphism is additive. Along this line, other types of functions have been studied also. For example, Daif in \cite{4} studied this question for multiplicative derivation of a ring, proving that a multiplicative derivation with Martindale conditions is additive. Also, Li and Lu in \cite{elementarie} studied the question of additivity of elementary functions, that were introduced in the papers \cite{BresarandSemrl1999, Bresaretal2000, MolnarandSemrl2002}, in which the authors first introduced and studied linear elementary operators of algebras. Wang, in the article \cite{wang}, studied these three kinds of functions in a unified way by considering a new class of functions, and associated with each original kind of function a function in the new class, such that the original function is additive if and only if the associated function is zero.

Moreover, Jing in \cite{jingui} and \cite{jing} studied about the Jordan elementary maps, whose motivation also came from \cite{BresarandSemrl1999, Bresaretal2000, MolnarandSemrl2002}. In the following years, Ferreira with collaborators expanded these studies to several maps and non-associative structures, namely alternative rings (see \cite{6,9} and references therein) and Jordan algebras (see \cite{7,8} and references therein). Specifically, in \cite{8} the authors studied the additivity of $n$-multiplicative isomorphisms, as well as, $n$-multiplicative derivations between Jordan rings satisfying some conditions.

Axial algebras were introduced recently by Hall, Rehren and Shpectorov in \cite{13}. Important examples of axial algebras include Jordan algebras that are generated by idempotents \cite{16}. According to \cite{22}, the first studies about these types of algebras date back to the work of Albert \cite{1}. He showed that when a power-associative algebra $A$ is commutative, then $A = A_1(a)+A_0(a)+A_\frac{1}{2}(a)$, for any idempotent $a$. In particular, this is the case when $A$ is a Jordan algebra. Other classes of axial algebras are Griess algebras associated with vertex operator algebras (VOAs) and  Majorana algebras (see references in \cite{13}). The Griess algebra is a $196884$-dimensional non-associative algebra over $\mathbb{R}$. Norton showed that it contains idempotents, which he called axes that generate the algebra \cite{3}. VOAs were first introduced by physicists but particularly became of interest to mathematicians with Frenkel, Lepowsky, and Meurman's \cite{12} construction of the moonshine VOA $V^\natural$, whose automorphism group is the Monster Group $M$, also known as the Fischer-Griess monster, the largest sporadic finite simple group. The rigorous theory of VOAs was developed by Borcherds \cite{2} and it was instrumental in his proof of the monstrous moonshine conjecture. Majorana algebras are the predecessors of axial algebras. They were introduced by Sasha Ivanov \cite{14} to axiomatize some key properties of $V^\natural$. Majorana algebras were introduced to cover subalgebras of the Griess algebra (and some others), whereas
axial algebras are a wider class of algebras. We refer the reader to the introduction of \cite{13} for further information.

Throughout, let $\mathbb{F}$ be a field. An algebra is a vector space $A$ over $\mathbb{F}$ with a multiplication $\cdot : A \times A \rightarrow A$ that is bilinear. We will always consider algebras which are finite-dimensional here. We do not assume that our algebra has an identity, or that there are multiplicative inverses. In fact, the algebras we will consider will almost never have an identity. They will be commutative, but non-associative, by which we mean that they are not necessarily associative. That is, in general $x(yz) \neq (xy)z$ for $x, y, z$ in the algebra. Non-associative algebras can be unintuitive, difficult to work with, and very little can be said about them because of the non-associativity. However, there are classes of non-associative algebras which we can work with; these have some extra structure that allows us to get a handle on them. Axial algebras will be algebras generated by some idempotents which we call axes. Just as in the Jordan algebra case, we will ask that the algebra has a Peirce-like decomposition into a sum of eigenspaces. It is this and most importantly that the multiplication satisfies the fusion law which will give our algebras enough structure to be able to work with
them.

In studying preservers on algebras or rings, one usually assumes additivity in advance. Recently, however, a growing number of papers began investigating preservers that are not necessarily additive. Characterizing the interrelation between the multiplicative and additive structures of a ring or algebra in an interesting topic. The first result about the additivity of maps on rings was given by Martindale III \cite{18}. He established a condition on a ring $R$ such that every multiplicative isomorphism on $R$ is additive.

Our goal is to study the additivity of several kinds of functions, namely $n$-multiplicative isomorphisms, $n$-multiplicative derivations, elementary maps and Jordan elementary maps in a class of algebras that includes Jordan algebras with idempotents, $\mathcal{J}(\alpha)$-axial algebras and $\mathcal{M}(\alpha, \beta)$-axial algebras, and will be presented in Section \ref{axial}. For this, we will present in Section \ref{nullifying} a class of functions that are similar to those functions introduced in \cite{wang} and we will call them nullifying functions. In Sections \ref{j-algebra} and \ref{m-algebra} we will prove that such functions are zero under conditions similar to the Martindale conditions. Therefore, we will extend the results of \cite{FerreiraSmiglyBarreiro} and also answer its question.

\section{Axial Algebras}\label{axial}

In this section, we fix a field $\mathbb{F}$. The Peirce decomposition of Jordan algebras with idempotent lead us to a class of commutative algebras (not necessarily associative) with idempotents that we will call algebras over fusion laws. We also recall some concepts which are essential for the discussion of axial algebras (see reference \cite{13}).

Let $A$ be a commutative algebra over $\mathbb{F}$ and let $e\in A$. Then we define $L_e:A\rightarrow A$ by $L_e(x) = ex$ for all $x \in A$. For $\lambda \in \mathbb{F}$, the corresponding $\lambda$-eigenspace of $L_e$ is given by:
\begin{equation*}
A_\lambda = A_\lambda(e) = \{x \in A : L_e(x) = \lambda x\}.
\end{equation*}
If $S \subseteq \mathbb{F}$, then we will also write $A_S$ for the sum of all $A_\lambda$ with $\lambda \in S$.

\begin{definition}\label{d2.1}
A \textbf{fusion law} is a function $\mathcal{F}$ that assigns to each pair $(i,j)\in\mathbb{F}\times\mathbb{F}$ a subset $\mathcal{F}_{i,j}\subseteq\mathbb{F}$ and satisfies the following:
\begin{equation*}
\mathcal{F}_{i,j}=\mathcal{F}_{j,i}
\end{equation*}
for all $i,j\in\mathbb{F}$.
\end{definition}

We usually present a fusion law $\mathcal{F}$ by a finite table, where $\mathcal{F}_{i,j}=\emptyset$ if $(i,j)$ is outside the table. We define important examples of fusion laws: the associative-commutative-type law $\mathcal{A}$, the Jordan-type law $\mathcal{J}(\eta)$ and the (generalized) Monster-type law $\mathcal{M}(\alpha,\beta)$.

\begin{equation*}
\begin{array}{ccccc}
\begin{array}{|c||c|c|} 
\hline
\mathcal{A}& 1& 0\\
\hline
\hline
1& 1&\\
\hline
0&& 0\\
\hline
\end{array}
&&
\begin{array}{|c||c|c|c|} 
\hline
\mathcal{J}(\eta)& 1& 0& \eta\\
\hline
\hline
1& 1&& \eta\\ 
\hline
0&& 0& \eta\\ 
\hline
\eta& \eta& \eta& 1,0\\
\hline
\end{array}
&&
\begin{array}{|c||c|c|c|c|} 
\hline
\mathcal{M}(\alpha,\beta)& 1& 0& \alpha & \beta \\
\hline
\hline
1& 1&& \alpha & \beta \\
\hline
0&& 0& \alpha & \beta \\
\hline
\alpha & \alpha & \alpha & 1,0& \beta \\
\hline
\beta & \beta& \beta& \beta & 1,0,\alpha \\
\hline
\end{array}
\end{array}
\end{equation*}

\begin{definition}\label{d2.2}
Let $\mathcal{F}$ a fusion law. An \textbf{$\mathcal{F}$-algebra} is a pair $(A,e)$, where $A$ is a commutative algebra $A$ and $e\in A$ is an idempotent, such that:
\begin{itemize}
\item $A=\bigoplus_{i\in\mathbb{F}}A_i$,
\item $A_iA_j\subseteq A_{\mathcal{F}_{i,j}}$ for all $i,j\in\mathbb{F}$.
\end{itemize}
The $\mathcal{F}$-algebra is said to be \textbf{primitive} if $A_1 = \mathbb{F}e$.
\end{definition}

If $\mathrm{Char}(\mathbb{F})\neq2$, then a Jordan algebra with an idempotent is a $\mathcal{J}(\frac{1}{2})$-algebra. For other important examples of algebras over fusion laws, we will mention the axial algebras. Moreover, we may have $A_\lambda=0$, therefore an $\mathcal{A}$-algebra is a $\mathcal{J}(\eta)$-algebra for all $\eta\in\mathbb{F}\setminus\{1,0\}$ and likewise a $\mathcal{J}(\eta)$-axial algebra is an $\mathcal{M}(\eta,\beta)$-algebra for all $\beta \in \mathbb{F}\setminus\{1,0,\eta\}$.

\begin{definition}
Let $\mathcal{F}$ be a fusion law. An \textbf{$\mathcal{F}$-axial algebra} is a pair $(A,X)$, where $A$ is a commutative algebra and $X$ is a set of idempotent elements of $A$, such that:
\begin{itemize}
\item For every $e\in X$, then $(A,e)$ is an $\mathcal{F}$-algebra,
\item $A$ is generated as an algebra by $X$.
\end{itemize}
The elements of $X$ are called \textbf{axes}, and we say that $(A, X)$ is \textbf{primitive} if $(A,e)$ is primitive for all axes $e$.
\end{definition}

All the axes in the generating set $X$ satisfy the same fusion law $\mathcal{F}$. However, although two axes $e,e'\in X$ have the same fusion law, we do not assume that $A_\lambda (e)$ and $A_\lambda (e')$ have the same dimension. Now let us present some examples of axial algebras.

\begin{example}
The motivating example is the \textbf{Griess algebra}. It is a $196.884$-dimensional $\mathbb{R}$-algebra. Its automorphism group is the \textbf{Monster group}, an important group in the classification of finite simples groups. Griess introduced it in 1980 in the context of what is called \textbf{monstruous moonshine} and used it in 1982 to exhibit a construction of the Monster group. Norton showed that it is an $\mathcal{M}(\frac{1}{4},\frac{1}{32})$-axial algebra (see \cite{3}). For further details, see, for instance, \cite{3,12,13,14}.
\end{example}

\begin{example}
Another example is given by the \textbf{Matsuo algebras} $M_\eta(\Gamma)$. They are defined in terms of what are called \textbf{Fischer spaces} $\Gamma$, that are defined in terms of a group $(G,D)$ of $3$-transpositions (a group generated by involutions such that $\abs{ab}\leq 3$ for any generating involutions $a$ and$b$). Examples of groups of $3$-transpositions include symplectic groups, unitary groups and orthogonal groups in characteristic two and Fischer groups $\mathrm{Fi}_{22}$, $\mathrm{Fi}_{23}$ and $\mathrm{Fi}_{24}$. The set $D$ may be taken as a set of axes for $M_\eta(\Gamma)$ and they satisfy the fusion law $\mathcal{J}(\eta)$. Moreover the set of elements of the form $x=a+b$, where $a,b\in D$ are such that $ab=0$, may be taken as a set of axes for $M_\eta(\Gamma)$ and they satisfy the fusion law $\mathcal{M}(2\eta,\eta)$. For more details, see, for instance, \cite{galt}.
\end{example}

\begin{example}
We already said that Jordan algebras with idempotents are $\mathcal{J}(\frac{1}{2})$-algebras. But it is worth to point here that all simple Jordan algebras over an algebraically closed fields are generated by idempotents, so that they are examples of $\mathcal{J}(\frac{1}{2})$-axial algebras.
\end{example}

\begin{example}
An important family of $\mathcal{M}(\alpha,\beta)$-axial algebras are the Norton-Sakuma algebras, which are primitive $2$-generated $\mathcal{M}(\frac{1}{4},\frac{1}{32})$-axial algebras over a field of characteristic $0$ (see \cite{15}).
\end{example}

\section{Nullifying Functions}\label{nullifying}

We will define the types of functions that constitute the main theme of this paper.

\begin{definition}\label{d2.5}
Let $A$ and $A'$ be $\mathcal{F}$-algebras and let $\varphi:A\rightarrow A'$ be a bijective map. For a positive integer $n$, we will call $\varphi$ an \textbf{$n$-multiplicative isomorphism} from $A$ to $A'$ if for all $t_1,\dots,t_{n-1},x \in A$:
\begin{equation*}
\varphi(Lx) = \varphi(L)\varphi(x),
\end{equation*}
where $L=L_{t_1}\cdots L_{t_{n-1}}$ and $\varphi(L)=L_{\varphi(t_1)}\cdots L_{\varphi(t_{n-1})}$.
\end{definition}

\begin{definition}\label{d2.6}
Let $A$ be an $\mathcal{F}$-algebra and let $d:A\rightarrow A$ be a map. For a positive integer $n$, we will call $d$ an \textbf{$n$-multiplicative derivation} in $A$ if for all $t_1,\dots,t_{n-1},x \in A$:
\begin{equation*}
d(Lx) = d(L)x + Ld(x),
\end{equation*}
where $L=L_{t_1}\cdots L_{t_{n-1}}$ and $d(L)=(L_{d(t_1)}\cdots L_{t_{n-1}})+\cdots+(L_{t_1}\cdots L_{d(t_{n-1})})$.
\end{definition}

\begin{definition}
Let $A$ and $A'$ be $\mathcal{F}$-algebras and let $M:A\rightarrow A'$ and $M^*:A'\rightarrow A$ be maps. We will call the ordered pair $(M,M^*)$ an \textbf{elementary function} from $A$ to $A'$ if for any $a,b\in A$ and $x,y\in A'$:
\begin{equation*}
\begin{array}{rcl}
M(a(M^*(x)b)) &=& M(a)(xM(b)),\\
M^*(x(M(a)y)) &=& M^*(x)(aM^*(y)).
\end{array}
\end{equation*}
We say that $(M,M^*)$ is \textbf{surjective} (resp. \textbf{additive}) if $M$ and $M^*$ are surjective (resp. additive).
\end{definition}

\begin{definition}
Let $A$ and $A'$ be $\mathcal{F}$-algebras and let $M:A\rightarrow A'$ and $M^*:A'\rightarrow A$ be maps. We will call the ordered pair $(M,M^*)$ a \textbf{Jordan elementary function} from $A$ to $A'$ if for any $a\in A$ and $x\in A'$:
\begin{equation*}
\begin{array}{rcl}
M(aM^*(x)+M^*(x)a)&=&M(a)x+xM(a),\\
M^*(M(a)x+xM(a))&=&aM^*(x)+M^*(x)a.
\end{array}
\end{equation*}
We say that $(M,M^*)$ is \textbf{surjective} (resp. \textbf{additive}) if $M$ and $M^*$ are surjective (resp. additive).
\end{definition}

All the four above kinds of functions can be studied by considering what we will call nullifying functions. The idea of these functions is that they measure how far the original function in question is from additivity.

\begin{definition}
Let $A$ be an $\mathcal{F}$-algebra and let us denote by $A^*$ be the set of all nonempty finite sequences of elements of $A$ (the \textbf{Kleene star} of $A$). A \textbf{nullifying} function is a function $f:A^*\rightarrow A$ such that:
\begin{itemize}
\item[(I)] For $s_1,\dots,s_n\in A$ and for $\sigma$ a permutation of $\{1,\dots,n\}$, then $f(s_1,\dots,s_n)=f(s_{\sigma(1)},\dots,s_{\sigma(n)})$,
\item[(II)] For $s_1,\dots,s_n,t_1,\dots,t_k\in A$, then we have $f(s_1,\dots,s_n,t_1+\cdots+t_k)=f(s_1,\dots,s_n,t_1,\dots,t_k)$ if and only if $f(t_1,\dots,t_k)=0$,
\item[(III)] For $x\in A$, then $f(x)=0$,
\item[(IV)] For $s_1,\dots,s_n\in A$, then $f(s_1,\dots,s_n,0)=f(s_1,\dots,s_n)$,
\item[(V)] There is a fixed positive integer $r$ such that, for all $t_1,\dots,t_r\in A$, the function $L=L_{t_1}\cdots L_{t_r}$ satisfies $L(f(s_1,\dots,s_n))=f(L(s_1),\dots,L(s_n))$ for $s_1,\dots,s_n\in A$.
\end{itemize}
\end{definition}

The following property follows easily by induction.

\begin{proposition}\label{add}
Let $f$ be an nullifying function. If $f(x,y)=0$ for any $x,y\in A$, then $f=0$.
\end{proposition}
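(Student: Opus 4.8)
The plan is to prove by induction on $n \geq 2$ that $f(s_1,\dots,s_n) = 0$ for all $s_1,\dots,s_n \in A$, where the base case $n=2$ is exactly the hypothesis. So assume $n \geq 3$ and that $f$ vanishes on every sequence of length strictly less than $n$; I want to show $f(s_1,\dots,s_n)=0$. The idea is to split the last entry trivially as $s_n = s_n + 0$ and to use property (II): since $f(s_n) = 0$ by (III) (a sequence of length one) — or more usefully $f(s_n, 0) = f(s_n)=0$ by (IV) and (III) — we get from (II) that $f(s_1,\dots,s_{n-1}, s_n) = f(s_1,\dots,s_{n-1}, s_n, 0) = f(s_1,\dots,s_{n-1},0)$, and the right-hand side equals $f(s_1,\dots,s_{n-1})$ by (IV), which is $0$ by the induction hypothesis since it has length $n-1 < n$. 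Actually this already does it, so the induction is almost immediate.

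Let me restate the mechanism cleanly to make sure the direction of the ``if and only if'' in (II) is used correctly. Property (II) says $f(s_1,\dots,s_m, t_1+\dots+t_k) = f(s_1,\dots,s_m,t_1,\dots,t_k)$ holds if and only if $f(t_1,\dots,t_k)=0$. Take $m = n-1$, $k=2$, $t_1 = s_n$, $t_2 = 0$. Then $f(t_1,t_2) = f(s_n, 0) = f(s_n) = 0$ by (IV) followed by (III). Hence the ``if'' direction of (II) gives
\begin{equation*}
f(s_1,\dots,s_{n-1}, s_n + 0) = f(s_1,\dots,s_{n-1}, s_n, 0).
\end{equation*}
The left side is $f(s_1,\dots,s_{n-1},s_n)$ since $s_n+0 = s_n$; the right side is $f(s_1,\dots,s_{n-1},s_n)$ again by (IV), which is circular. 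So this particular decomposition is not the right one; instead I should decompose one of the $s_i$ into a genuine sum that shortens nothing but rather lengthens the sequence in a controlled way, then collapse a different sub-block.

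The correct approach: write $s_1 = s_1 + 0$ is still useless, so instead use that $f(s_{n-1}, s_n) = 0$ by the induction hypothesis (length $2 < n$ when $n \geq 3$), and apply (II) with the block $\{s_{n-1}, s_n\}$: taking $t_1 = s_{n-1}$, $t_2 = s_n$, and the prefix $s_1,\dots,s_{n-2}$, property (II) yields $f(s_1,\dots,s_{n-2}, s_{n-1}+s_n) = f(s_1,\dots,s_{n-2},s_{n-1},s_n) = f(s_1,\dots,s_n)$. But the left-hand side $f(s_1,\dots,s_{n-2}, s_{n-1}+s_n)$ is a sequence of length $n-1$, hence $0$ by the induction hypothesis. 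Therefore $f(s_1,\dots,s_n) = 0$, completing the induction. The only subtlety — and the thing to state carefully — is that for $n=3$ we need $f$ to vanish on all length-$2$ sequences, which is precisely the hypothesis, and property (I) (symmetry) is what guarantees we may freely regard $\{s_{n-1},s_n\}$ as the block being merged regardless of position; no serious obstacle arises, and properties (III), (IV), (V) are not even needed beyond (III) providing the degenerate cases. I would write this up in a few lines as a clean induction.
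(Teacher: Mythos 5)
Your final argument is correct and is exactly the induction the paper has in mind (the paper only remarks that the proposition ``follows easily by induction''): merge the last two entries via the ``if'' direction of (II), using the hypothesis $f(s_{n-1},s_n)=0$, and then invoke the induction hypothesis on the resulting length-$(n-1)$ sequence, with (III) handling length one. The only cleanup needed is to delete the first, admittedly circular, attempt with $s_n+0$ and keep just the corrected paragraph.
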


The motivation for the properties (I) to (V) will be clear after presenting the following examples.

\begin{example}\label{isomorphism}
Let $A$ and $A'$ be $\mathcal{F}$-algebras and let $\varphi:A\rightarrow A'$ be an $n$-multiplicative isomorphism. Define:
\begin{equation*}
f(x_1,\dots,x_n)=\varphi^{-1}(\varphi(x_1+\cdots+x_n)-\varphi(x_1)-\cdots-\varphi(x_n)).
\end{equation*}
Then $f$ is an nullifying function.
\end{example}

\begin{example}\label{derivation}
Let $A$ be an $\mathcal{F}$-algebra and $d:A\rightarrow A$ be an $n$-multiplicative derivation. Define:
\begin{equation*}
f(x_1,\dots,x_n)=d(x_1+\cdots+x_n)-d(x_1)-\cdots-d(x_n).
\end{equation*}
Then $f$ is an nullifying function.
\end{example}

\begin{example}\label{elementary}
Let $A$ and $A'$ be $\mathcal{F}$-algebras and let $(M, M^*)$ be an elementary function from $A$ to $A'$ such that $M$ is bijective and $M^*$ is surjective. Define:
\begin{equation*}
f(x_1,\dots,x_n)=M^{-1}(M(x_1+\cdots+x_n)-M(x_1)-\cdots-M(x_n)).
\end{equation*}
Then $f$ is an nullifying function.
\end{example}

\begin{example}\label{jordan-elementary}
Let $A$ and $A'$ be $\mathcal{F}$-algebras such that $\mathrm{Char}(\mathbb{F})\neq2$ and let $(M, M^*)$ be a Jordan elementary function from $A$ to $A'$ such that $M$ is bijective and $M^*$ is surjective. Define:
\begin{equation*}
f(x_1,\dots,x_n)=M^{-1}(M(x_1+\cdots+x_n)-M(x_1)-\cdots-M(x_n)).
\end{equation*}
Then $f$ is an nullifying function.
\end{example}

Therefore, in each of the previous examples, the original function is additive if and only if the associated nullifying function is zero everywhere.

\section{$\mathcal{J}(\alpha)$-algebras}\label{j-algebra}

In this section we will consider the $\mathcal{J}(\alpha)$-algebras, that include Jordan algebras with idempotents and $\mathcal{J}(\alpha)$-axial algebras.

Let $A$ be an $\mathcal{J}(\alpha)$-algebra. Then $A$ has a direct sum decomposition $A = A_1 \oplus A_0 \oplus A_\alpha$. The following three conditions are very important to prove one of our main results. They are the same as conditions (i) to (iii) about Jordan algebras with idempotents in \cite{7,8,ji-jordan} and conditions (i) to (iii) about $\mathcal{J}(\alpha)$-axial algebras in \cite{FerreiraSmiglyBarreiro}, where they were called Martindale-like conditions.
\begin{itemize}
\item[(i)] For $a_i \in A_i$ ($i = 1, 0$), if $t_\alpha a_i = 0$ for all $t_\alpha \in A_\alpha$, then $a_i = 0$,
\item[(ii)] For $a_0 \in A_0$, if $t_0a_0 = 0$ for all $t_0 \in A_0$, then $a_0 = 0$,
\item[(iii)] For $a_\alpha \in A_\alpha$, if $t_0a_\alpha = 0$ for all $t_0 \in A_0$, then $a_\alpha = 0$.
\end{itemize}
Assuming that the $\mathcal{J}(\alpha)$-algebra $A$ satisfies the above three sufficient conditions, we prove that every nullifying function is zero. Hence, we generalize to the context of $\mathcal{J}(\alpha)$-algebras the results proved for Jordan algebras with idempotents and $\mathcal{J}(\alpha)$-axial algebras. We will present examples of $\mathcal{J}(\alpha)$-algebras that satisfy the Martindale-like conditions. The first one is a Jordan algebra.

\begin{example}
Let $\mathbb{F}$ be a field such that $\mathrm{Char}(\mathbb{F})\neq2$ and let $B$ be an algebra of dimension $4$ over $\mathbb{F}$ with basis $\{e_{11},e_{10},e_{01},e_{00}\}$ and multiplication table given by:
\begin{equation*}
e_{ij}e_{kl} = \delta_{jk}e_{il},
\end{equation*}
for $i,j,k,l\in\{1,0\}$, where $\delta_{jk}$ is the Kronecker delta function. Consider the Jordan algebra $A=B^{(+)}$ with the same vector space as $B$ and multiplication defined by $x*y=\frac{1}{2}(xy+yx)$. Then $A$ is a Jordan algebra and the element $e_{11}$ is idempotent. So $(A,e_{11})$ is a $\mathcal{J}(\frac{1}{2})$-algebra and we have the following decomposition:
\begin{equation*}
A_1=\mathbb{F}e_{11},\quad\quad A_{\frac{1}{2}}=\mathbb{F}e_{10}+\mathbb{F}e_{01},\quad\quad A_0=\mathbb{F}e_{00}.
\end{equation*}
It can be shown that this $\mathcal{J}(\frac{1}{2})$-algebra satisfies the Martindale-like conditions (i) to (iii).
\end{example}

The following one is a $\mathcal{J}(\alpha)$-axial algebra.

\begin{example}
Let $\mathbb{F}$ be a field such that $\mathrm{Char}(\mathbb{F})\neq2,3,5$ and let $A$ be an algebra of dimension $3$ over $\mathbb{F}$ with basis $\{e_A,e_B,e_C\}$ and multiplication table given by:
\begin{equation*}
\begin{array}{|c||c|c|c|}
\hline
&e_A&e_B&e_C
\\\hline\hline
e_A&e_A&\frac{1}{8}(e_A + e_B - e_C)&\frac{1}{8}(e_A - e_B + e_C)
\\\hline
e_B&\frac{1}{8}(e_A + e_B - e_C)&e_B&\frac{1}{8}(- e_A + e_B + e_C)
\\\hline
e_C&\frac{1}{8}(e_A - e_B + e_C)&\frac{1}{8}(- e_A + e_B + e_C)&e_C\\
\hline
\end{array}
\end{equation*}
Then $(A,e_A)$ is a $\mathcal{J}(\frac{1}{4})$-algebra and we have the following decomposition:
\begin{equation*}
\begin{array}{c}
A_1 = \mathbb{F}e_A,\quad\quad A_0 = \mathbb{F}(-e_A+4e_B+4e_C),\quad\quad A_\frac{1}{4} = \mathbb{F}(e_B - e_C).
\end{array}
\end{equation*}
We also can show that $(A,\{e_A,e_B,e_C\})$ is a $\mathcal{J}(\frac{1}{4})$-axial algebra. This algebra is called a Norton-Sakuma algebra of the type 2A, by the article \cite{15}. Moreover, this $\mathcal{J}(\frac{1}{4})$-álgebra satisfies the Martindale-like conditions (i) to (iii).
\end{example}

Now we present the main result.

\begin{theorem}\label{3.1}
Let $(A,e)$ be a $\mathcal{J}(\alpha)$-algebra that satisfies the following conditions:
\begin{itemize}
\item[(i)] For $a_i \in A_i$ ($i = 1, 0$), if $t_\alpha a_i = 0$ for all $t_\alpha \in A_\alpha$, then $a_i = 0$,
\item[(ii)] For $a_0 \in A_0$, if $t_0a_0 = 0$ for all $t_0 \in A_0$, then $a_0 = 0$,
\item[(iii)] For $a_\alpha \in A_\alpha$, if $t_0a_\alpha = 0$ for all $t_0 \in A_0$, then $a_\alpha = 0$.
\end{itemize}
Then every nullifying function is zero.
\end{theorem}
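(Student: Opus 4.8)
The plan is to use Proposition \ref{add}: it suffices to show $f(x,y)=0$ for all $x,y\in A$. By property (V), there is a fixed $r$ such that $L=L_{t_1}\cdots L_{t_r}$ satisfies $L(f(x,y))=f(L(x),L(y))$; in a $\mathcal{J}(\alpha)$-algebra $L_e$ is diagonalizable, so taking all $t_i=e$ gives that each homogeneous component of $f(x,y)$ with respect to the decomposition $A=A_1\oplus A_0\oplus A_\alpha$ is again a value of $f$ on homogeneous arguments (after using property (II) to split $x=x_1+x_0+x_\alpha$, $y=y_1+y_0+y_\alpha$, assuming one has already killed $f$ on shorter/mixed tuples). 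The real work is therefore a case analysis: show $f(a_i,b_j)=0$ for all $i,j\in\{1,0,\alpha\}$ and all $a_i\in A_i$, $b_j\in A_j$, and along the way handle longer homogeneous tuples as needed to feed back into property (II).

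The mechanism in each case is the same trick used in all Martindale-type additivity arguments: pick an auxiliary idempotent-related operator $L$ (a product of $L_{t}$'s, of the fixed length $r$, with the $t$'s chosen in a convenient Peirce component) for which we can independently compute $L$ applied to a sum versus $L$ applied to the summands, using the fusion law $\mathcal{J}(\alpha)$ to force cross terms into controlled eigenspaces, and then use property (V) to transfer the resulting identity to $f$. Concretely, for a target component sitting in $A_i$ one multiplies by elements of $A_\alpha$ or $A_0$ and invokes condition (i), (ii) or (iii) to conclude the component is zero: e.g. to show some $f(\cdot,\cdot)\in A_1$ vanishes we show $t_\alpha f(\cdot,\cdot)=0$ for all $t_\alpha\in A_\alpha$ and apply (i); to kill an $A_0$-part we test against $A_0$ via (ii); to kill an $A_\alpha$-part we test against $A_0$ via (iii). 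Each such test equation is produced by writing $L(f(s_1,\dots,s_n))=f(L(s_1),\dots,L(s_n))$ for a cleverly chosen $L$ and recognizing the right-hand side as $0$ from the induction hypothesis (shorter tuples, or tuples already shown to give $0$) together with properties (I)--(IV).

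I would organize the proof as a sequence of claims, roughly: (a) $f$ vanishes on any tuple all of whose entries lie in a single Peirce component $A_i$ — treating $i=1$, $i=0$, $i=\alpha$ separately, with the $\alpha$-case relying on $A_\alpha A_\alpha\subseteq A_1\oplus A_0$; (b) using (a) and property (II), reduce an arbitrary $f(x,y)$ to a sum of terms $f(a_i,b_j)$ with $i\neq j$; (c) dispatch the mixed pairs $(1,0)$, $(1,\alpha)$, $(0,\alpha)$ one at a time by the multiply-and-apply-a-condition method above; (d) conclude $f(x,y)=0$ and invoke Proposition \ref{add}. The commutativity property (I) cuts the number of mixed cases in half and lets us always place the ``nicer'' factor where we want it.

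The main obstacle I anticipate is bookkeeping the fusion law inside the length-$r$ operator $L$: because $r$ is only guaranteed to be \emph{some} fixed positive integer, one cannot assume $r=1$, so the operators $L=L_{t_1}\cdots L_{t_r}$ must be built so that, regardless of $r$, the chosen $t_k$'s (all taken equal to $e$, or all taken in $A_0$, etc.) still act predictably on each Peirce component — e.g. $L_e^r$ acts as $1$ on $A_1$, $0$ on $A_0$, and $\alpha^r$ on $A_\alpha$, and a product of $r$ copies of $L_{t_0}$ with $t_0\in A_0$ maps $A_1$ to $0$. Verifying that these iterated operators isolate exactly the component one wants, and that the cross terms generated when expanding $L$ applied to a product of homogeneous elements land where conditions (i)--(iii) can be applied, is the delicate part; everything else is the standard Martindale induction packaged through the nullifying-function axioms.
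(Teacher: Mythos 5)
Your overall strategy (reduce to $f(x,y)=0$ via Proposition \ref{add}, split into Peirce components, and kill each component by applying multiplication operators and invoking (i)--(iii) through property (V)) is indeed the paper's strategy, but as written the proposal has a genuine gap at exactly the point you flag as ``delicate,'' and your sketched route for the hardest subcase would fail. First, since property (V) only commutes $f$ with products of exactly $r$ left multiplications, you cannot literally ``test against $t_\alpha\in A_\alpha$'' or ``test against $A_0$'' with a single multiplication; the paper resolves this by building families $\mathcal{L}_0$, $\mathcal{L}_1=\{L_e^r\}$, $\mathcal{L}_\alpha=\{L_e^{r-1}L_t: t\in A_\alpha\}$ of length-$r$ operators and then composing them into $\mathcal{F}_\alpha=\mathcal{L}_0\mathcal{L}_1$, $\mathcal{F}_1=\mathcal{F}_\alpha\mathcal{L}_\alpha\mathcal{L}_1$, $\mathcal{F}_0=\mathcal{F}_\alpha\mathcal{L}_\alpha\mathcal{L}_0$, proving (Lemma \ref{jaxial}) that $\mathcal{F}_i$ annihilates $A_j$ for $j\neq i$ and is jointly faithful on $A_i$. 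Your proposal acknowledges the $r>1$ bookkeeping problem but does not construct such separating families, and taking all $t_i=e$ does not isolate components, since $L_e^r f(x,y)=f(x_1+\alpha^r x_\alpha,\,y_1+\alpha^r y_\alpha)$ and $f$ is not linear.

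Second, and more seriously, the same-component case $f(a_\alpha,b_\alpha)=0$ cannot be dispatched by your stated mechanism: testing the $A_\alpha$-part against $A_0$ (condition (iii)) transfers, via (V), to $f$ evaluated on another pair of elements of $A_\alpha$ (because $A_0A_\alpha\subseteq A_\alpha$), which is circular. The paper needs an extra idea here: Lemma \ref{3.3} shows $f(a_\alpha a_0,b_\alpha)=0$ by applying the operator $L=L_{\alpha^{-1}e}^{r-1}L_{\alpha^{-1}e+a_\alpha}$ -- note the non-homogeneous multiplier $\alpha^{-1}e+a_\alpha$ -- to the already-established mixed identity $f(a_0,b_\alpha)=0$, then peeling off the junk term in $A_1+A_0$ with Lemma \ref{3.2} and property (II); only then does $\mathcal{F}_\alpha a_\alpha$, which has the form $a'_\alpha a'_0$, let one kill the $A_\alpha$-part of $f(a_\alpha,b_\alpha)$, and the cases $(1,1)$, $(0,0)$ are reduced to the $(\alpha,\alpha)$ case via $\mathcal{F}_1A_1,\mathcal{F}_0A_0\subseteq A_\alpha$ rather than handled ``separately.'' Relatedly, your ordering (same-component tuples first, mixed pairs later) inverts the actual logical dependencies: the mixed case (the paper's Lemma \ref{3.2}) must come first, since both Lemma \ref{3.3} and the component-isolation steps feed on it. So the proposal captures the Martindale-type framework but is missing the two ideas that make it work in this setting.
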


Throughout the remainder of this subsection, we will assume the hypotheses of Theorem \ref{3.1}. Let $f$ be a nullifying function and let $r$ be a positive integer such that, for all $t_1,\dots,t_r\in A$, the function $L=L_{t_1}\cdots L_{t_r}$ satisfies:
\begin{equation*}
L(f(s_1,\dots,s_n))=f(L(s_1),\dots,L(s_n)),
\end{equation*}
for all $s_1,\dots,s_n\in A$. Moreover we will define the following sets of functions:
\begin{itemize}
\item Let $\mathcal{L}_0$ consist of all functions of the form $L_{t_1}\cdots L_{t_r}$, where $t_1,\dots,t_r\in A_0$,
\item Let $\mathcal{L}_1$ consist of all functions of the form $L_e^r$,
\item Let $\mathcal{L}_\alpha$ consist of all functions of the form $L_e^{r-1}L_t$, where $t\in A_\alpha$.
\end{itemize}
Therefore, by the fusion law $\mathcal{J}(\alpha)$:
\begin{equation*}
\mathcal{L}_iA_j\subseteq A_{\mathcal{J}(\alpha)_{i,j}}.
\end{equation*}
Because $\alpha\neq0$, we have the following properties:
\begin{itemize}
\item[(i)] For $a_i \in A_i$ ($i = 1,0$), if $\mathcal{L}_\alpha a_i = 0$, then $a_i = 0$,
\item[(ii)] For $a_0 \in A_0$, if $\mathcal{L}_0a_0 = 0$, then $a_0 = 0$,
\item[(iii)] For $a_\alpha \in A_\alpha$, if $\mathcal{L}_0a_\alpha = 0$, then $a_\alpha = 0$.
\end{itemize}
Now we define the following:
\begin{itemize}
\item $\mathcal{F}_\alpha=\mathcal{L}_0\mathcal{L}_1$.
\item $\mathcal{F}_1=\mathcal{F}_\alpha\mathcal{L}_\alpha\mathcal{L}_1$.
\item $\mathcal{F}_0=\mathcal{F}_\alpha\mathcal{L}_\alpha\mathcal{L}_0$.
\end{itemize}
Now we show several lemmas.

\begin{lemma}\label{jaxial}
Let $i,j\in\{0,1,\alpha\}$. If $i\neq j$, then $\mathcal{F}_iA_j=0$. Also, for $a_i\in A_i$, if $\mathcal{F}_ia_i=0$, then $a_i=0$.
\end{lemma}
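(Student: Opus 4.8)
The plan is to prove the two assertions simultaneously by exploiting the fusion law $\mathcal{J}(\alpha)$ together with the three Martindale-like consequences stated for $\mathcal{L}_0,\mathcal{L}_1,\mathcal{L}_\alpha$. Recall the definitions $\mathcal{F}_\alpha=\mathcal{L}_0\mathcal{L}_1$, $\mathcal{F}_1=\mathcal{F}_\alpha\mathcal{L}_\alpha\mathcal{L}_1$, and $\mathcal{F}_0=\mathcal{F}_\alpha\mathcal{L}_\alpha\mathcal{L}_0$. The key numerical input is the $\mathcal{J}(\alpha)$-table: $A_1A_1\subseteq A_1$, $A_1A_0=0$, $A_1A_\alpha\subseteq A_\alpha$, $A_0A_0\subseteq A_0$, $A_0A_\alpha\subseteq A_\alpha$, and $A_\alpha A_\alpha\subseteq A_1\oplus A_0$. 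Since each $\mathcal{L}_k$ maps $A_j$ into $A_{\mathcal{J}(\alpha)_{k,j}}$, I can compute the ``type'' of each composite operator space applied to each eigenspace purely by chasing indices through this table.

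First I would establish, for each $\mathcal{F}_i$ with $i\in\{0,1,\alpha\}$, a ``type table'': the subspace $\mathcal{F}_iA_j$ lands inside $A_{?}$ for each $j$. For instance, $\mathcal{F}_\alpha A_j=\mathcal{L}_0\mathcal{L}_1A_j$: applying $\mathcal{L}_1$ sends $A_1\to A_1$, $A_0\to 0$, $A_\alpha\to A_\alpha$, and then $\mathcal{L}_0$ sends $A_1\to 0$, $A_\alpha\to A_\alpha$; so $\mathcal{F}_\alpha A_1=\mathcal{F}_\alpha A_0=0$ while $\mathcal{F}_\alpha A_\alpha\subseteq A_\alpha$. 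Then $\mathcal{F}_1A_j=\mathcal{F}_\alpha\mathcal{L}_\alpha\mathcal{L}_1A_j$ and $\mathcal{F}_0A_j=\mathcal{F}_\alpha\mathcal{L}_\alpha\mathcal{L}_0A_j$: I trace $A_j$ through $\mathcal{L}_1$ (resp. $\mathcal{L}_0$), then $\mathcal{L}_\alpha$, then through the already-computed action of $\mathcal{F}_\alpha$, which kills everything outside $A_\alpha$. The upshot I expect is that for $j\neq i$ the chain must pass through a zero step, giving $\mathcal{F}_iA_j=0$, which is the first claim; and for $j=i$ the chain survives and lands in $A_i$ (using $A_\alpha A_\alpha\subseteq A_1\oplus A_0$ and the final $\mathcal{L}_1$ or $\mathcal{L}_0$ to project onto the correct component $A_1$ or $A_0$ respectively, while $\mathcal{F}_\alpha$ itself already preserves $A_\alpha$).

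For the faithfulness statement — if $\mathcal{F}_ia_i=0$ then $a_i=0$ — I would peel the definition of $\mathcal{F}_i$ from the outside and repeatedly invoke conditions (i), (ii), (iii) for $\mathcal{L}_\alpha,\mathcal{L}_0$. Concretely, suppose $\mathcal{F}_1a_1=0$, i.e. $\mathcal{F}_\alpha\mathcal{L}_\alpha\mathcal{L}_1a_1=0$. Write $b_\alpha=\mathcal{L}_\alpha\mathcal{L}_1a_1$, an element of $A_\alpha$ (for arbitrary choices of the operators); then $\mathcal{F}_\alpha b_\alpha=0$, i.e. $\mathcal{L}_0\mathcal{L}_1 b_\alpha=0$, hence $\mathcal{L}_0(\mathcal{L}_1 b_\alpha)=0$ with $\mathcal{L}_1b_\alpha\in A_\alpha$, so by (iii) $\mathcal{L}_1 b_\alpha=0$; but $\mathcal{L}_1=L_e^r$ acts as $1$ on $A_1$ and as $\alpha^r\neq 0$ on $A_\alpha$... so $b_\alpha=0$; then $\mathcal{L}_\alpha\mathcal{L}_1a_1=0$, and $\mathcal{L}_1a_1\in A_1$, so by (i) $\mathcal{L}_1a_1=0$, hence $a_1=0$ since $L_e^r$ is the identity on $A_1$. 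The cases $i=0$ and $i=\alpha$ are analogous, using (ii) and (i)/(iii) at the appropriate steps and noting $\mathcal{L}_1$ restricted to $A_0$ is zero so one must instead use that $\mathcal{F}_0$ ends in $\mathcal{L}_0$, which is faithful on $A_0$ by (ii).

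The main obstacle I anticipate is purely bookkeeping: making sure that at each stage the ``for all operators in $\mathcal{L}_k$'' quantifier is used correctly — the hypotheses (i)–(iii) require the annihilation to hold for \emph{all} operators of the given form, so when I write $b_\alpha=\mathcal{L}_\alpha\mathcal{L}_1a_1=0$ I must argue this for every choice, which is fine because $\mathcal{F}_\alpha b_\alpha=0$ is assumed for every choice of the $\mathcal{F}_\alpha$-operator and every choice hidden inside $b_\alpha$. A second subtlety is checking that $\mathcal{L}_1=L_e^r$ is invertible (indeed the identity) on $A_1$ and is multiplication by the nonzero scalar $\alpha^r$ on $A_\alpha$, and is $0$ on $A_0$ — this is where primitivity is not needed but $\alpha\neq 0,1$ is used. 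Once the type table is in hand, both halves of the lemma fall out by this finite case analysis.
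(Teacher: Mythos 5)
Your proof is correct and takes essentially the same route as the paper: the same fusion-law eigenspace chase gives $\mathcal{F}_iA_j=0$ for $i\neq j$, and your faithfulness argument is the contrapositive of the paper's, peeling off $\mathcal{F}_\alpha$, then $\mathcal{L}_\alpha$, then $\mathcal{L}_1$ or $\mathcal{L}_0$ using the derived conditions (i)--(iii) in the same order. One harmless slip in your sketch: $\mathcal{F}_iA_i$ lands in $A_\alpha$, not in $A_i$ (the outermost factor applied is $\mathcal{F}_\alpha$), but the lemma never asserts such an inclusion and your detailed faithfulness argument correctly treats the relevant images as lying in $A_\alpha$.
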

\begin{proof}
We divide the proof in steps.

\medskip
\noindent
\textbf{Step 1:} Consider the set $\mathcal{F}_\alpha$. By the fusion law $\mathcal{J}(\alpha)$ we have:
\begin{equation*}
\mathcal{F}_\alpha A_0
=\mathcal{L}_0\mathcal{L}_1A_0
=\mathcal{L}_00
=0,
\end{equation*}
and also:
\begin{equation*}
\mathcal{F}_\alpha A_1
=\mathcal{L}_0\mathcal{L}_1A_1
\subseteq\mathcal{L}_0A_1
=0.
\end{equation*}
Moreover, we have the following:
\begin{equation*}
\begin{array}{rcl}
a_\alpha\in A_\alpha\setminus\{0\}&\overset{\alpha\neq0}{\Rightarrow}&\exists L_1\in\mathcal{L}_1:L_1a_\alpha\in A_\alpha\setminus\{0\}\\
&\overset{\text{(iii)}}{\Rightarrow}&\exists L_0\in\mathcal{L}_0:L_0L_1a_\alpha\in A_\alpha\setminus\{0\}.
\end{array}
\end{equation*}

\medskip
\noindent
\textbf{Step 2:} Consider the set $\mathcal{F}_1$. Then:
\begin{equation*}
\mathcal{F}_1A_0
=\mathcal{F}_\alpha\mathcal{L}_\alpha\mathcal{L}_1A_0
=\mathcal{F}_\alpha\mathcal{L}_\alpha0
=0,
\end{equation*}
and also:
\begin{equation*}
\mathcal{F}_1A_\alpha
=\mathcal{F}_\alpha\mathcal{L}_\alpha\mathcal{L}_1A_\alpha
\subseteq\mathcal{F}_\alpha\mathcal{L}_\alpha A_\alpha
\subseteq\mathcal{F}_\alpha(A_1+A_0)
\overset{\text{S1}}{=}0.
\end{equation*}
Now we have the following:
\begin{equation*}
\begin{array}{rcl}
a_1\in A_1\setminus\{0\}&\Rightarrow&\exists L_1\in\mathcal{L}_1:L_1a_1\in A_1\setminus\{0\}\\
&\overset{\text{(i)}}{\Rightarrow}&\exists L_\alpha\in\mathcal{L}_\alpha:L_\alpha L_1a_1\in A_\alpha\setminus\{0\}\\
&\overset{\text{S1}}{\Rightarrow}&\exists F_\alpha\in\mathcal{F}_\alpha:F_\alpha L_\alpha L_1a_1\in A_\alpha\setminus\{0\}.
\end{array}
\end{equation*}

\medskip
\noindent
\textbf{Step 3:} Consider the set $\mathcal{F}_0$. Then:
\begin{equation*}
\mathcal{F}_0A_1
=\mathcal{F}_\alpha\mathcal{L}_\alpha\mathcal{L}_0A_1
=\mathcal{F}_\alpha\mathcal{L}_\alpha0
=0,
\end{equation*}
and also:
\begin{equation*}
\mathcal{F}_0A_\alpha
=\mathcal{F}_\alpha\mathcal{L}_\alpha\mathcal{L}_0A_\alpha
\subseteq\mathcal{F}_\alpha\mathcal{L}_\alpha A_\alpha
\subseteq\mathcal{F}_\alpha(A_1+A_0)\overset{\text{S1}}{=}0.
\end{equation*}
Now we have the following:
\begin{equation*}
\begin{array}{rcl}
a_0\in A_0\setminus\{0\}&\overset{\text{(ii)}}{\Rightarrow}&\exists L_0\in\mathcal{L}_0:L_0a_0\in A_0\setminus\{0\}\\
&\overset{\text{(i)}}{\Rightarrow}&\exists L_\alpha\in\mathcal{L}_\alpha:L_\alpha L_0a_0\in A_\alpha\setminus\{0\}\\
&\overset{\text{S1}}{\Rightarrow}&\exists F_\alpha\in\mathcal{F}_\alpha:F_\alpha L_\alpha L_0a_0\in A_\alpha\setminus\{0\}.
\end{array}
\end{equation*}
Therefore, we conclude the proof.
\end{proof}

\begin{lemma}\label{3.2}
Let $a_i \in A_i$, where $i \in \{1, 0, \alpha\}$. Then
$f(a_1,a_0,a_\alpha)=0$.
\end{lemma}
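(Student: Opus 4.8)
The plan is to decompose $f(a_1,a_0,a_\alpha)$ into its homogeneous components and annihilate each one separately, using the operator sets $\mathcal{F}_1,\mathcal{F}_0,\mathcal{F}_\alpha$ supplied by Lemma \ref{jaxial}.

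The first step is to note that property (V) of a nullifying function iterates. If $L$ and $L'$ are each a product of $r$ left multiplications $L_t$ with $t\in A$, then $LL'(f(s_1,\dots,s_n)) = L\bigl(f(L'(s_1),\dots,L'(s_n))\bigr) = f(LL'(s_1),\dots,LL'(s_n))$, and by induction any product of $kr$ such operators satisfies the same intertwining identity with $f$. Now every element of $\mathcal{F}_\alpha=\mathcal{L}_0\mathcal{L}_1$ is a word of length $2r$ in the operators $L_t$, and every element of $\mathcal{F}_1=\mathcal{F}_\alpha\mathcal{L}_\alpha\mathcal{L}_1$ and of $\mathcal{F}_0=\mathcal{F}_\alpha\mathcal{L}_\alpha\mathcal{L}_0$ is a word of length $4r$; since $2r$ and $4r$ are multiples of $r$, every $F\in\mathcal{F}_i$ satisfies $F(f(s_1,\dots,s_n))=f(F(s_1),\dots,F(s_n))$.

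Next, write $f(a_1,a_0,a_\alpha)=c_1+c_0+c_\alpha$ with $c_i\in A_i$, and fix $i\in\{1,0,\alpha\}$. For any $F\in\mathcal{F}_i$, Lemma \ref{jaxial} gives $F(a_j)\in\mathcal{F}_iA_j=0$ for the two indices $j\neq i$, so by the intertwining identity $F(f(a_1,a_0,a_\alpha))$ equals $f$ evaluated at a tuple all but one of whose entries is $0$; by properties (I), (IV), (III) this reduces to $f$ of a single element, hence to $0$. Thus $\mathcal{F}_i f(a_1,a_0,a_\alpha)=0$. Applying $\mathcal{F}_iA_j=0$ for $j\neq i$ once more, $F(f(a_1,a_0,a_\alpha))=F(c_1+c_0+c_\alpha)=F(c_i)$, so $\mathcal{F}_ic_i=0$, and the second assertion of Lemma \ref{jaxial} forces $c_i=0$. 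Carrying this out for $i=1,0,\alpha$ yields $f(a_1,a_0,a_\alpha)=c_1+c_0+c_\alpha=0$.

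I do not expect a genuine obstacle: the argument is essentially bookkeeping with the defining axioms of a nullifying function and with Lemma \ref{jaxial}. The one point that must be checked carefully is that the operator words defining $\mathcal{F}_1,\mathcal{F}_0,\mathcal{F}_\alpha$ have length divisible by $r$, so that property (V) can legitimately be iterated to pull $F$ through $f$; once this is in place, everything else is routine.
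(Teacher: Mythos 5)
Your proof is correct and follows essentially the same route as the paper: decompose $f(a_1,a_0,a_\alpha)$ into homogeneous components, apply $\mathcal{F}_i$, pull it through $f$ via property (V), kill the off-diagonal terms with Lemma \ref{jaxial}, and conclude each component vanishes by the second assertion of that lemma. Your preliminary check that elements of $\mathcal{F}_1,\mathcal{F}_0,\mathcal{F}_\alpha$ are compositions of blocks of $r$ left multiplications, so that (V) legitimately iterates, is a point the paper uses implicitly without comment; spelling it out is fine but does not change the argument.
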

\begin{proof}
Define:
\begin{equation*}
s = f(a_1,a_0,a_\alpha).
\end{equation*}
Write $s = s_1 + s_0 + s_\alpha \in A$, where $s_i \in A_i$ for $i \in \{1, 0, \alpha\}$. Now let $i\in\{1,0,\alpha\}$. We will show that $s_i=0$. Indeed:
\begin{equation*}
\mathcal{F}_is_i\overset{\text{L\ref{jaxial}}}{=}\mathcal{F}_is\overset{\text{(V)}}{=}f(\mathcal{F}_ia_1,\mathcal{F}_ia_0,\mathcal{F}_ia_\alpha)\overset{\text{L\ref{jaxial}}}{=}f(\mathcal{F}_ia_i,0,0)=0.
\end{equation*}
Thus, by Lemma \ref{jaxial} we have $s_i = 0$.
\end{proof}

\begin{lemma}\label{3.3}
Let $a_0 \in A_0$ and let $a_\alpha, b_\alpha \in A_\alpha$. Then $f(a_\alpha a_0, b_\alpha) = 0$.
\end{lemma}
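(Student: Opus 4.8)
The plan is to use the decomposition $s = f(a_\alpha a_0, b_\alpha)$ into Peirce components $s = s_1 + s_0 + s_\alpha$ and to annihilate each component separately, mirroring the strategy of Lemma~\ref{3.2}. The key difference is that here the first argument $a_\alpha a_0 \in A_\alpha$ lies in $A_\alpha$ rather than in a uniform eigenspace, so a direct application of property (V) together with Lemma~\ref{jaxial} will not immediately separate things: applying an operator from $\mathcal F_i$ to $a_\alpha a_0$ does not obviously kill it unless $i = \alpha$. So I would first record that $a_\alpha a_0 \in A_\alpha$ by the fusion law $\mathcal{J}(\alpha)$ (since $A_\alpha A_0 \subseteq A_\alpha$), and hence that $f(a_\alpha a_0, b_\alpha)$ is a value of $f$ on two elements of $A_\alpha$.

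The main idea I would exploit is to rewrite $a_\alpha a_0$ inside $f$ using the operator-compatibility of $f$. Concretely, one should realize $a_\alpha a_0$ as the image $L(a_\alpha)$ of $a_\alpha$ under some operator built from multiplication by $a_0$ (and by $e$), i.e.\ cook up an element of $\mathcal L_\alpha$ or a suitable product so that $a_\alpha a_0 = L_0'(a_\alpha)$ for an operator $L_0'$ which is a product of $r$ left-multiplications by elements of $A_0$ (or, more precisely, use that $L_e^{r-1}L_{a_0}$ applied to $\alpha$-eigenvectors recovers $\alpha^{r-1} a_\alpha a_0$, up to the nonzero scalar $\alpha^{r-1}$). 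Then property~(V) gives
\begin{equation*}
L_0'(f(a_\alpha, b_\alpha)) = f(L_0'(a_\alpha), L_0'(b_\alpha)) = f(\alpha^{r-1} a_\alpha a_0, \text{(something in }A_\alpha)),
\end{equation*}
and combining with property~(III) (which gives $f(a_\alpha, b_\alpha)$... no — rather one needs $f$ on a \emph{single} element to vanish; here there are two arguments). The cleaner route: since by Lemma~\ref{3.2} we already know $f$ vanishes on any triple of Peirce-homogeneous elements, and since $a_\alpha, b_\alpha, a_\alpha a_0$ are all homogeneous, the plan is to apply operators from $\mathcal F_1$, $\mathcal F_0$, $\mathcal F_\alpha$ to $s$, use (V) to push them inside, and observe that $\mathcal F_i(a_\alpha a_0) = 0$ for $i \in \{1,\alpha\}$ while $\mathcal F_0(a_\alpha a_0) \subseteq \mathcal F_0 A_\alpha = 0$ by Lemma~\ref{jaxial} — so in fact $\mathcal F_i(a_\alpha a_0) = 0$ for \emph{all} $i$. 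Hence for each $i$,
\begin{equation*}
\mathcal F_i s_i = \mathcal F_i s = f(\mathcal F_i(a_\alpha a_0), \mathcal F_i b_\alpha) = f(0, \mathcal F_i b_\alpha) \overset{\text{(IV)}}{=} f(\mathcal F_i b_\alpha),
\end{equation*}
which vanishes by property~(III). Then Lemma~\ref{jaxial} forces $s_i = 0$ for every $i$, so $s = 0$.

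The step I expect to be the real obstacle is the use of property~(IV): the statement $f(0, b) = f(b)$ requires the $0$ to be a genuine argument that can be absorbed, and property~(IV) as stated only allows dropping a trailing $0$, while property~(I) (permutation invariance) lets me move it to the end — so I should invoke (I) then (IV). A subtler point is whether $f(\mathcal F_i b_\alpha) = 0$: property~(III) says $f$ vanishes on one-element sequences, and $\mathcal F_i b_\alpha$ is a single element of $A$, so this is fine, but I must be careful that $\mathcal F_i$ here denotes applying one specific operator from the set, not a formal sum. Once these bookkeeping issues with properties (I), (III), (IV) are handled correctly, the argument is a direct transcription of the proof of Lemma~\ref{3.2}, with the extra observation that the non-homogeneous-looking first slot $a_\alpha a_0$ is in fact homogeneous of weight $\alpha$ and is annihilated by every $\mathcal F_i$. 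I would therefore present the proof compactly in that form.
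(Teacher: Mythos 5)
There is a genuine gap at the precise point where the lemma has content. Your ``cleaner route'' rests on the claim that $\mathcal{F}_i(a_\alpha a_0)=0$ for every $i$, including $i=\alpha$. That is false: by the fusion law $a_\alpha a_0\in A_\alpha$, and Lemma \ref{jaxial} says $\mathcal{F}_\alpha$ annihilates $A_1$ and $A_0$ but \emph{detects} $A_\alpha$ --- if $\mathcal{F}_\alpha x_\alpha=0$ with $x_\alpha\in A_\alpha$, then $x_\alpha=0$ (concretely, $\mathcal{F}_\alpha=\mathcal{L}_0\mathcal{L}_1$, the operator $L_e^r$ acts on $A_\alpha$ as the nonzero scalar $\alpha^r$, and condition (iii) then produces a nonzero image under some $L_0\in\mathcal{L}_0$). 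So your argument does kill $s_1$ and $s_0$, exactly as in Step 1 of Lemma \ref{3.4}, but it cannot touch $s_\alpha$: you only get $\mathcal{F}_\alpha s_\alpha=f(\mathcal{F}_\alpha(a_\alpha a_0),\mathcal{F}_\alpha b_\alpha)$, whose arguments are again a pair of (generally nonzero) elements of $A_\alpha$ --- the situation you started from, so the route is circular. Indeed, if it worked it would show $f(x_\alpha,y_\alpha)=0$ for arbitrary $x_\alpha,y_\alpha\in A_\alpha$ without ever using that the first entry is a product $a_\alpha a_0$; but that statement is exactly the hard step of Lemma \ref{3.4}, which is why the paper needs Lemma \ref{3.3} in the first place. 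The product structure must enter somewhere.

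The paper's proof is essentially your first, abandoned idea, but applied to $a_0$ rather than to $a_\alpha$: take $L=L_{\alpha^{-1}e}^{r-1}L_{\alpha^{-1}e+a_\alpha}$. Then $La_0=a_\alpha a_0$ (the $e$-part dies because $eA_0=0$), while $Lb_\alpha=b_\alpha+b_1+b_0$ with $b_1+b_0=L_{\alpha^{-1}e}^{r-1}(a_\alpha b_\alpha)\in A_1+A_0$. Since $f(a_0,b_\alpha)=0$ is already known from Lemma \ref{3.2} (pad with a zero entry via (I) and (IV)), property (V) gives $f(a_\alpha a_0,\,b_\alpha+b_1+b_0)=0$; Lemma \ref{3.2} together with (II) lets one split this into $f(a_\alpha a_0,b_\alpha,b_1+b_0)=0$, and Lemma \ref{3.2} also gives $f(a_\alpha a_0+b_\alpha,\,b_1+b_0)=0$. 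Now the \emph{if and only if} in property (II), applied with $s_1=b_1+b_0$, $t_1=a_\alpha a_0$, $t_2=b_\alpha$, forces $f(a_\alpha a_0,b_\alpha)=0$. The two devices you are missing are the affine operator $L_{\alpha^{-1}e+a_\alpha}$, which transports the already-established identity $f(a_0,b_\alpha)=0$ onto the pair in question, and the backward direction of (II), used to strip off the unwanted terms $b_1+b_0$.
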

\begin{proof}
Consider the function $L=L_{\alpha^{-1}e}^{r-1}L_{\alpha^{-1}e+a_\alpha}$. Then $La_0=a_\alpha a_0$ and $Lb_\alpha=b_\alpha+L_{\alpha^{-1}e}^{r-1}(a_\alpha b_\alpha)$. But $L_{\alpha^{-1}e}^{r-1}(a_\alpha b_\alpha)\in A_1+A_0$, so there are $b_1\in A_1$ and $b_0\in A_0$ such that $L_{\alpha^{-1}e}^{r-1}(a_\alpha b_\alpha)=b_1+b_0$. Thus, by successive applications of Lemma \ref{3.2}, we have:
\begin{equation*}
\begin{array}{rcl}
0
&\overset{\text{L\ref{3.2}}}{=}& Lf(a_0,b_\alpha)\\
&\overset{\text{(V)}}{=}& f(La_0,Lb_\alpha)\\
&=& f(a_\alpha a_0,b_\alpha+b_1+b_0)\\
&\overset{\text{L\ref{3.2}}}{=}& f(a_\alpha a_0,b_\alpha,b_1+b_0)
\end{array}
\end{equation*}
Also, by Lemma \ref{3.2} we have:
\begin{equation*}
f(a_\alpha a_0+b_\alpha,b_1+b_0)=0.
\end{equation*}
Therefore, by property (II) we have $f(a_\alpha a_0, b_\alpha) = 0$.
\end{proof}

\begin{lemma}\label{3.4}
Let $a_i, b_i \in A_i$, where $i\in\{1,0,\alpha\}$. Then $f(a_i, b_i) = 0$.
\end{lemma}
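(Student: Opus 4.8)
The plan is to run the $\mathcal{F}_k$-decomposition argument already used in the proof of Lemma~\ref{3.2}. Fix $i\in\{1,0,\alpha\}$ and $a_i,b_i\in A_i$, put $s=f(a_i,b_i)$, and write $s=s_1+s_0+s_\alpha$ with $s_k\in A_k$. It is enough to show $s_k=0$ for each $k\in\{1,0,\alpha\}$, and for that it suffices to prove $Fs=0$ for every $F\in\mathcal{F}_k$: by Lemma~\ref{jaxial} we have $Fs=Fs_k$, so $\mathcal{F}_ks_k=0$, and then Lemma~\ref{jaxial} again gives $s_k=0$. Since every $F\in\mathcal{F}_k$ is a composition of left multiplications whose length ($2r$ for $k=\alpha$, $4r$ for $k=1,0$) is a multiple of $r$, applying property (V) in blocks of $r$ yields $Fs=f(Fa_i,Fb_i)$.

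Two structural facts drive the proof. (a) For every $i$ we have $\mathcal{F}_iA_i\subseteq\mathcal{F}_\alpha A_\alpha$: this is trivial for $i=\alpha$; for $i=1$ note $\mathcal{L}_1A_1=A_1$ and $\mathcal{L}_\alpha A_1\subseteq A_\alpha$ (as $L_tA_1\subseteq A_\alpha A_1\subseteq A_\alpha$ and the $L_e^{r-1}$ factor of $\mathcal{L}_\alpha$ preserves $A_\alpha$), so $\mathcal{F}_1A_1=\mathcal{F}_\alpha(\mathcal{L}_\alpha\mathcal{L}_1A_1)\subseteq\mathcal{F}_\alpha A_\alpha$, and $i=0$ is analogous using $\mathcal{L}_0A_0\subseteq A_0$ and $\mathcal{L}_\alpha A_0\subseteq A_\alpha$. (b) Every element of $\mathcal{F}_\alpha A_\alpha$ lies in $A_\alpha$ and has the form $u_\alpha u_0$ with $u_\alpha\in A_\alpha$, $u_0\in A_0$: a typical member of $\mathcal{F}_\alpha=\mathcal{L}_0\mathcal{L}_1$ is $L_{c_1}\cdots L_{c_r}L_e^r$ with $c_1,\dots,c_r\in A_0$, and for $z\in A_\alpha$ we have $L_e^rz=\alpha^rz$ and $w:=L_{c_2}\cdots L_{c_r}z\in A_\alpha$, so the operator sends $z$ to $\alpha^rc_1w=w(\alpha^rc_1)\in A_\alpha A_0\subseteq A_\alpha$ (read $w=z$ when $r=1$).

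With these in hand the two cases are immediate. If $k\neq i$, then $\mathcal{F}_kA_i=0$ by Lemma~\ref{jaxial}, so $Fa_i=Fb_i=0$ and hence $Fs=f(0,0)=0$ by properties (III) and (IV). If $k=i$, then by (a) and (b) the element $Fa_i$ has the form $u_\alpha u_0$ and $Fb_i\in A_\alpha$, so $Fs=f(Fa_i,Fb_i)=0$ directly by Lemma~\ref{3.3}. In every case $\mathcal{F}_ks_k=0$, whence $s_k=0$; summing over $k$ gives $f(a_i,b_i)=s=0$.

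The only substantive point—and the one to get right—is fact (b): recognizing that an operator of $\mathcal{F}_\alpha$ applied to an $\alpha$-eigenvector outputs an element of the precise shape $a_\alpha a_0$ is exactly what upgrades the ``off-diagonal'' Lemma~\ref{3.3} to a statement about arbitrary same-component pairs. Beyond that one should only take care with small $r$ (the empty product when $r=1$) and make sure property (V) is legitimately invoked for compositions of length $2r$ and $4r$ rather than exactly $r$.
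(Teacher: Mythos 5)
Your proof is correct and follows essentially the same route as the paper: decompose $s=f(a_i,b_i)$ into eigencomponents, kill the components $s_k$ with $k\neq i$ via Lemma~\ref{jaxial}, and kill $s_i$ by observing that elements of $\mathcal{F}_\alpha A_\alpha$ have the form $a'_\alpha a'_0$ so that Lemma~\ref{3.3} applies. The only difference is organizational: the paper first settles the case $i=\alpha$ and then reduces $i\in\{1,0\}$ to $f(A_\alpha,A_\alpha)=0$, whereas you treat all three cases uniformly by noting $\mathcal{F}_iA_i\subseteq\mathcal{F}_\alpha A_\alpha$ and invoking Lemma~\ref{3.3} directly, which is a harmless streamlining.
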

\begin{proof}
We divide the proof in steps.

\medskip
\noindent
\textbf{Step 1:} Let $s = f(a_\alpha,b_\alpha)$ and write $s = s_1 + s_0 + s_\alpha \in A$, where $s_i \in A_i$ for $i \in \{1, 0, \alpha\}$. For $i\in\{0,1\}$, we have:
\begin{equation*}
\mathcal{F}_is_i\overset{\text{L\ref{jaxial}}}{=}\mathcal{F}_is\overset{\text{(V)}}{=}f(\mathcal{F}_ia_\alpha,\mathcal{F}_ib_\alpha)\overset{\text{L\ref{jaxial}}}{=}f(0,0)=0,
\end{equation*}
so, by Lemma \ref{jaxial} we have $s_i=0$. Now, for $F_\alpha\in\mathcal{F}_\alpha$, then $F_\alpha a_\alpha=a'_\alpha a'_0$, where $a'_\alpha\in A_\alpha$ and $a'_0\in A_0$, and also $F_\alpha b_\alpha=b'_\alpha\in A_\alpha$, so:
\begin{equation*}
F_\alpha s_\alpha\overset{\text{L\ref{jaxial}}}{=}F_\alpha s\overset{\text{(V)}}{=}f(F_\alpha a_\alpha,F_\alpha b_\alpha)=f(a'_\alpha a'_0,b'_\alpha)\overset{\text{L\ref{3.3}}}{=}0,
\end{equation*}
hence, by Lemma \ref{jaxial}, we have $s_\alpha=0$.

\medskip
\noindent
\textbf{Step 2:} Let $i\in\{1,0\}$, let $s = f(a_i,b_i)$ and write $s = s_1 + s_0 + s_\alpha \in A$, where $s_i \in A_i$ for $i \in \{1, 0, \alpha\}$. For $j\in\{0,1,\alpha\}\setminus\{i\}$, we have:
\begin{equation*}
\mathcal{F}_js_j\overset{\text{L\ref{jaxial}}}{=}\mathcal{F}_js\overset{\text{(V)}}{=}f(\mathcal{F}_ja_i,\mathcal{F}_jb_i)\overset{\text{L\ref{jaxial}}}{=}f(0,0)=0,
\end{equation*}
thus, by Lemma \ref{jaxial} we have $s_j=0$. Now:
\begin{equation*}
\mathcal{F}_i s_i\overset{\text{L\ref{jaxial}}}{=}\mathcal{F}_i s\overset{\text{(V)}}{=}f(\mathcal{F}_i a_i,\mathcal{F}_i b_i)\subseteq f(A_\alpha,A_\alpha)\overset{\text{S1}}{=}0,
\end{equation*}
hence, by Lemma \ref{jaxial}, we have $s_i=0$.
\end{proof}

\begin{lemma}
$f=0$.
\end{lemma}
\begin{proof}
For $x,y\in A$ we have:
\begin{equation*}
\begin{array}{rcl}
f(x,y)
&=& f(x_1+x_0+x_\alpha,y_1+y_0+y_\alpha)\\
&\overset{\text{L\ref{3.2}}}{=}& f(x_1,x_0,x_\alpha,y_1,y_0,y_\alpha)\\
&=& f(x_1,y_1,x_0,y_0,x_\alpha,y_\alpha)\\
&\overset{\text{L\ref{3.4}}}{=}& f(x_1+y_1,x_0+y_0,x_\alpha+y_\alpha)\\
&\overset{\text{L\ref{3.2}}}{=}& 0.
\end{array}
\end{equation*}
Now apply the Proposition \ref{add}.
\end{proof}

\begin{corollary}\label{c3}
Let $(A,e)$ a $\mathcal{J}(\alpha)$-algebra that satisfies the following conditions:
\begin{itemize}
\item[(i)] For $a_i \in A_i$ ($i = 1, 0$), if $t_\alpha a_i = 0$ for all $t_\alpha \in A_\alpha$, then $a_i = 0$,
\item[(ii)] For $a_0 \in A_0$, if $t_0a_0 = 0$ for all $t_0 \in A_0$, then $a_0 = 0$,
\item[(iii)] For $a_\alpha \in A_\alpha$, if $t_0a_\alpha = 0$ for all $t_0 \in A_0$, then $a_\alpha = 0$.
\end{itemize}
Then:
\begin{itemize}
\item[(a)] Every $n$-multiplicative isomorphism is additive,
\item[(b)] Every $n$-multiplicative derivation is additive,
\item[(c)] Every surjective elementary function is additive,
\item[(d)] If $\mathrm{Char}(\mathbb{F})\neq2$, then every surjective Jordan elementary function is additive.
\end{itemize}
\end{corollary}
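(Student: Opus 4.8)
The plan is to deduce all four assertions from Theorem \ref{3.1}, using the dictionary built in Examples \ref{isomorphism}--\ref{jordan-elementary} and the equivalence recorded at the end of Section \ref{nullifying}: to each of the four classes of maps there is attached a function $A^*\to A$ which is a nullifying function, and the map is additive precisely when that function vanishes identically. Since $(A,e)$ satisfies conditions (i)--(iii), Theorem \ref{3.1} tells us that \emph{every} nullifying function on $A$ is zero. So for each item it is enough to name the associated function, cite the relevant example to know it is a nullifying function, apply Theorem \ref{3.1}, and read off additivity. Note that in every case the associated nullifying function lives on $A$ itself, so only the hypotheses on $A$ --- which are exactly what we are given --- ever enter; in particular nothing needs to be assumed about the codomain $A'$ in the isomorphism and elementary cases.

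For (a): given an $n$-multiplicative isomorphism $\varphi\colon A\to A'$, Example \ref{isomorphism} furnishes the nullifying function $f(x_1,\dots,x_n)=\varphi^{-1}\bigl(\varphi(x_1+\cdots+x_n)-\varphi(x_1)-\cdots-\varphi(x_n)\bigr)$; by Theorem \ref{3.1}, $f\equiv 0$, hence $\varphi$ is additive. Item (b) is the same argument with the nullifying function $f(x_1,\dots,x_n)=d(x_1+\cdots+x_n)-d(x_1)-\cdots-d(x_n)$ of Example \ref{derivation}. For (c): Example \ref{elementary} is stated for a pair $(M,M^*)$ with $M$ bijective and $M^*$ surjective, so I would first check that a surjective elementary function automatically has $M$ bijective --- a short computation showing, from the elementary identities and conditions (i)--(iii), that the surjective map $M$ is injective (one gets $M(0)=0$ immediately and then injectivity by the usual prime-like argument) --- and then apply Example \ref{elementary} to $f(x_1,\dots,x_n)=M^{-1}\bigl(M(x_1+\cdots+x_n)-M(x_1)-\cdots-M(x_n)\bigr)$; Theorem \ref{3.1} gives $f\equiv 0$, whence $(M,M^*)$ is additive, the equivalence recorded at the end of Section \ref{nullifying} yielding additivity of $M^*$ as well, not only of $M$. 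Item (d) is identical, now with the Jordan elementary identities and Example \ref{jordan-elementary}; the hypothesis $\mathrm{Char}(\mathbb{F})\neq 2$ there is precisely the standing assumption of that example. Proposition \ref{add} is invoked inside the proof of Theorem \ref{3.1}, so it does not need to be revisited here.

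The essential difficulty lies entirely upstream of this corollary: it is Theorem \ref{3.1} (the vanishing of an arbitrary nullifying function under the Martindale-like conditions) together with the verifications behind Examples \ref{isomorphism}--\ref{jordan-elementary} --- in particular the fact that, for the elementary and Jordan elementary pairs, $f\equiv 0$ already forces \emph{both} $M$ and $M^*$ to be additive. Given all that, the only genuinely non-mechanical step internal to the corollary is the reconciliation, in (c) and (d), of the word ``surjective'' in the statement with the ``bijective $M$'' hypothesis of Examples \ref{elementary} and \ref{jordan-elementary}; I would settle this by the short kernel argument indicated above, after which matching each class of map to its example is pure bookkeeping.
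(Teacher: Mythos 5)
Items (a) and (b) of your proposal match the paper exactly, and your treatment of the ``bijective $M$'' issue in (c)--(d) (prove injectivity of the surjective $M$ from the elementary identities plus conditions (i)--(iii)) is the same first step the paper takes. The genuine gap is in how you dispose of $M^*$. The nullifying function of Example \ref{elementary} (resp.\ \ref{jordan-elementary}) is built from $M$ alone, so its vanishing --- which is all Theorem \ref{3.1} gives you --- is equivalent only to the additivity of $M$; the ``equivalence recorded at the end of Section \ref{nullifying}'' does not say anything about $M^*$, and additivity of the pair requires both maps to be additive. Your sentence ``yielding additivity of $M^*$ as well, not only of $M$'' is therefore an unjustified leap, and it is precisely the point where the codomain threatens to enter: the naive move of swapping the pair to $(M^*,M)$ produces a nullifying function on $A'$, where no Martindale-like hypotheses are available.

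The paper closes this gap by proving that $M^*$ is injective as well (a second computation of the same kind as the one for $M$, again using Lemma \ref{jaxial} with $r=1$), and then observing that $((M^*)^{-1},M^{-1})$ is an elementary (resp.\ Jordan elementary) function from $A$ to $A'$; its associated nullifying function again lives on $A$, so Theorem \ref{3.1} applies once more and gives additivity of $(M^*)^{-1}$, hence of $M^*$. So your plan needs two additions: the injectivity of $M^*$ (which you skip entirely, since your reading of Example \ref{elementary} only asks $M^*$ to be surjective) and the second application of the machinery to the inverted pair. Alternatively, one can deduce additivity of $M^*$ directly: once $M$ is additive and injective, the identity $M(a(M^*(x)b))=M(a)(xM(b))$ gives $a\bigl((M^*(x+y)-M^*(x)-M^*(y))b\bigr)=0$ for all $a,b\in A$, and Lemma \ref{jaxial} (with $r=1$) forces $M^*(x+y)-M^*(x)-M^*(y)=0$; but some argument of this sort must be supplied --- as written, your proof of (c) and (d) only establishes additivity of $M$.
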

\begin{proof}
The items \textbf{(a)} and \textbf{(b)} follow from Examples \ref{isomorphism} and \ref{derivation}.

\medskip
\noindent
\textbf{c)} By Example \ref{elementary}, it suffices to show that $M$ and $M^*$ are injective and note that the pair $((M^*)^{-1},M^{-1})$ is an elementary function from $A$ to $B$.

We will show that $M$ is injective. For any $x,u,v\in A$, there are $u',v'\in B$ such that $M^*(u')=u$ and $M^*(v')=v$, so:
\begin{equation*}
u(vx)=u(xv)=M^*(u')(xM^*(v'))=M^*(u'(M(x)v')).
\end{equation*}
Now let $x,y\in A$ such that $M(x)=M(y)$. Then for any $u,v\in A$ we have $u(vx)=u(vy)$. Let $s=x-y$ and recall the definitions of $\mathcal{F}_\alpha$, $\mathcal{F}_1$ and $\mathcal{F}_0$ where $r=1$. For $i\in\{1,0,\alpha\}$, then by Lemma \ref{jaxial} we have $\mathcal{F}_is_i=\mathcal{F}_is=0$, so by Lemma \ref{jaxial} we have $s_i=0$. Therefore $s=0$, so $x=y$.

Now we will show that $M^*$ is injective. For any $x,y\in A$ and $u\in B$, there are $x',y'\in A$ such that $M^*M(x')=x$ and $M^*M(y')=y$, so that:
\begin{equation*}
\begin{array}{rcl}
x(yM^{-1}(u))&=&x(M^{-1}(u)y)\\
&=&M^*M(x')(M^{-1}(u)M^*M(y))\\
&=&M^*(M(x')(uM(y')))\\
&=&M^*M(x(M^*(u)y)).
\end{array}
\end{equation*}
Now let $u,v\in B$ such that $M^*(u)=M^*(v)$. Then for $x,y\in A$ we have $x(yM^{-1}(u))=x(yM^{-1}(v))$. Analogously to the preceding paragraph, we can show that $M^{-1}(u)=M^{-1}(v)$, so $u=v$.

\medskip
\noindent
\textbf{d)} By Example \ref{jordan-elementary}, it suffices to show that $M$ and $M^*$ are injective and note that the pair $((M^*)^{-1},M^{-1})$ is a Jordan elementary function from $A$ to $B$.

We will show that $M$ is injective. For ant $x,u\in A$, there are $u'\in B$ such that $M^*(u')=\frac{1}{2}u$, so that:
\begin{equation*}
ux=M^*(u')x+xM^*(u')=M^*(u'M(x)+M(x)u').
\end{equation*}
Now let $x,y\in A$ such that $M(x)=M(y)$. Then for any $u\in A$ we have $ux=uy$. Thus, analogously to item \textbf{(c)}, we obtain $x=y$.

Now we will show that $M^*$ is injective. For any $x\in A$ and $u\in B$, there is $x'\in A$ such that $M^*M(x')=\frac{1}{2}x$, so that:
\begin{equation*}
\begin{array}{rcl}
xM^{-1}(u)&=&M^*M(x')M^{-1}(u)+M^{-1}(u)M^*M(x')\\
&=&M^*(M(x')u+uM(x'))\\
&=&M^*M(x'M^*(u)+M^*(u)x').
\end{array}
\end{equation*}
Now let $u,v\in B$ such that $M^*(u)=M^*(v)$. Then for $x\in A$ we have $xM^{-1}(u)=xM^{-1}(v)$. Thus, analogously to item \textbf{(c)}, we can show that $M^{-1}(u)=M^{-1}(v)$, so $u=v$.
\end{proof}

\section{$\mathcal{M}(\alpha,\beta)$-algebras}\label{m-algebra}

In this section, we will consider the $\mathcal{M}(\alpha,\beta)$-algebras, that include $\mathcal{M}(\alpha,\beta)$-axial algebras.

Let $A$ be an $\mathcal{M}(\alpha,\beta)$-algebra. Then $A$ has a direct sum decomposition $A = A_1 \oplus A_0 \oplus A_\alpha \oplus A_\beta$. The following five conditions are crucial to prove our main results. They are the same as conditions (i) to (v) about $\mathcal{M}(\alpha,\beta)$-axial algebras in \cite{FerreiraSmiglyBarreiro}, where they were also called Martindale-like conditions.
\begin{itemize}
\item[(i)] For $a_i \in A_i$ ($i = 1, 0$), if $t_\alpha a_i = 0$ for all $t_\alpha \in A_\alpha$, then $a_i = 0$,
\item[(ii)] For $a_0 \in A_0$, if $t_0a_0 = 0$ for all $t_0 \in A_0$, then $a_0 = 0$,
\item[(iii)] For $a_i \in A_i$ ($i = \alpha, \beta$), if $t_0a_i = 0$ for all $t_0 \in A_0$, then $a_i = 0$,
\item[(iv)] For $a_\beta \in A_\beta$, if $t_\alpha a_\beta = 0$ for all $t_\alpha \in A_\alpha$, then $a_\beta = 0$,
\item[(v)] For $a_\alpha \in A_\alpha$, if $t_\beta a_\alpha = 0$ for all $t_\beta \in A_\beta$, then $a_\alpha = 0$.
\end{itemize}
Let us assume that the $\mathcal{M}(\alpha,\beta)$-algebra $A$ satisfies the above five sufficient conditions, we prove that every nullifying function is zero. Hence, we generalize to the context of $\mathcal{M}(\alpha,\beta)$-algebras the results proved for $\mathcal{M}(\alpha,\beta)$-axial algebras. We will show an example of an $\mathcal{M}(\alpha,\beta)$-axial algebra that satisfy the Martindale-like conditions.

\begin{example}
Let $\mathbb{F}$ be a field such that $\mathrm{Char}(\mathbb{F})\neq2,3$ and we define an algebra $H$, called \textbf{Highwater algebra} because it was discovered in Venice during the disastrous floods in November 2019. The algebra $H$ has a base $\{a_i, \sigma_j \mid i \in \mathbb{Z}, j \in \mathbb{Z}_{>0}\}$ and multiplication table given by:
\begin{itemize}
\item[$\bullet$] $a_ia_j = \frac{1}{2}(a_i + a_j) + \sigma_{\abs{i-j}}$,
\item[$\bullet$] $a_i\sigma_j = \sigma_ja_i = -\frac{3}{4}a_i + \frac{3}{8}(a_{i-j} + a_{i+j}) + \frac{3}{2}\sigma_j$,
\item[$\bullet$] $\sigma_i\sigma_j = \frac{3}{4}(\sigma_i + \sigma_j) - \frac{3}{8}(\sigma_{\abs{i-j}} + \sigma_{i+j})$,
\end{itemize}
where we define $\sigma_0=0$. In particular, $a^2_i = \frac{1}{2} (a_i + a_i) + \sigma_0 = a_i$, thus each $a_i$ is an idempotent. By the article \cite{highwater}, for all $i\in\mathbb{Z}$ then $(H,a_i)$ is an $\mathcal{M}$-algebra, where $\mathcal{M}$ is the following fusion law:
\begin{equation*}
\begin{array}{|c||c|c|c|c|}
\hline
\mathcal{M}&1&0&2&\frac{1}{2}\\
\hline
\hline
1&\{1\}&\emptyset&\{2\}&\{\frac{1}{2}\}\\
\hline
0&\emptyset&\{0\}&\{2\}&\{\frac{1}{2}\} \\
\hline
2&\{2\}&\{2\}&\{0\}&\{\frac{1}{2}\}\\
\hline
\frac{1}{2}&\{\frac{1}{2}\}&\{\frac{1}{2}\}&\{\frac{1}{2}\}&\{0,2\}\\
\hline
\end{array}
\end{equation*}
In particular, $(H,a_i)$ is an $\mathcal{M}(2,\frac{1}{2})$-algebra. Namely:
\begin{equation*}
\begin{array}{rcl}
H_1&=&\mathbb{F}a_i,\\
H_0&=&\sum\limits_{j=1}^\infty \mathbb{F}(6a_i - 3(a_{-j+i} + a_{j+i}) + 4\sigma_{j}),\\
H_2&=&\sum\limits_{j=1}^\infty \mathbb{F}(2a_i - (a_{-j+i} + a_{j+i}) - 4\sigma_{j}),\\
H_\frac{1}{2}&=&\sum\limits_{j=1}^\infty \mathbb{F}(a_{-j+i} - a_{j+i}).
\end{array}
\end{equation*}
Also, by Theorem 2.2 of \cite{highwater}, $(H,\{a_0,a_1\})$ is an $\mathcal{M}$-axial algebra. Now it can be shown that this $\mathcal{M}(2,\frac{1}{2})$-algebra $(H,a_0)$ satisfies the Martindale-like conditions (i) to (v).
\end{example}

Now we present the main result.

\begin{theorem}\label{4.1}
Let $A$ be an $\mathcal{M}(\alpha,\beta)$-algebra that satisfies the following conditions:
\begin{itemize}
\item[(i)] For $a_i \in A_i$ ($i = 1, 0$), if $t_\alpha a_i = 0$ for all $t_\alpha \in A_\alpha$, then $a_i = 0$,
\item[(ii)] For $a_0 \in A_0$, if $t_0a_0 = 0$ for all $t_0 \in A_0$, then $a_0 = 0$,
\item[(iii)] For $a_i \in A_i$ ($i = \alpha, \beta$), if $t_0a_i = 0$ for all $t_0 \in A_0$, then $a_i = 0$,
\item[(iv)] For $a_\beta \in A_\beta$, if $t_\alpha a_\beta = 0$ for all $t_\alpha \in A_\alpha$, then $a_\beta = 0$,
\item[(v)] For $a_\alpha \in A_\alpha$, if $t_\beta a_\alpha = 0$ for all $t_\beta \in A_\beta$, then $a_\alpha = 0$.
\end{itemize}
Then every nullifying function is zero.
\end{theorem}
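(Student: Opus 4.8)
To prove Theorem~\ref{4.1}, the plan is to rerun the argument of Theorem~\ref{3.1}, carrying the fourth eigenspace $A_\beta$ through every step. First I fix $r$ as in property~(V) and introduce the operator families $\mathcal{L}_1=\{L_e^r\}$, $\mathcal{L}_0=\{L_{t_1}\cdots L_{t_r}:t_1,\dots,t_r\in A_0\}$, $\mathcal{L}_\alpha=\{L_e^{r-1}L_t:t\in A_\alpha\}$ and $\mathcal{L}_\beta=\{L_e^{r-1}L_t:t\in A_\beta\}$; the fusion law $\mathcal{M}(\alpha,\beta)$ tells how each acts on $A_1,A_0,A_\alpha,A_\beta$, and since $\alpha,\beta\neq0$ the hypotheses (i)--(v) translate into faithfulness statements for these families (for instance $\mathcal{L}_\alpha a_\beta=0\Rightarrow a_\beta=0$, $\mathcal{L}_\beta a_\alpha=0\Rightarrow a_\alpha=0$, together with the analogues of (i)--(iii) as in Section~\ref{j-algebra}); compositions of such families still satisfy a property~(V)-type commutation, by induction. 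The relevant elementary observations are: $\mathcal{L}_0\mathcal{L}_1$ annihilates $A_1$ and $A_0$ and preserves $A_\alpha,A_\beta$; $\mathcal{L}_\alpha$ sends $A_1,A_0$ into $A_\alpha$, sends $A_\alpha$ into $A_1+A_0$, and preserves $A_\beta$; and $\mathcal{L}_\beta$ sends $A_1,A_0,A_\alpha$ into $A_\beta$ and sends $A_\beta$ into $A_1+A_0+A_\alpha$.

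The core is the analogue of Lemma~\ref{jaxial}: for each $i\in\{1,0,\alpha,\beta\}$ one must produce a composite $\mathcal{F}_i$ of operators of the above type with $\mathcal{F}_iA_j=0$ for $j\neq i$ and $\mathcal{F}_i$ faithful on $A_i$. This is where $\mathcal{M}(\alpha,\beta)$ is genuinely more delicate than $\mathcal{J}(\alpha)$, since no homogeneous left multiplication separates $A_\alpha$ from $A_\beta$. I would set $\mathcal{F}_\beta:=(\mathcal{L}_0\mathcal{L}_1)\,\mathcal{L}_\alpha\,(\mathcal{L}_0\mathcal{L}_1)$: the outer factors kill $A_1,A_0$, the inner $\mathcal{L}_\alpha$ then kills $A_\alpha$ (its image lands in $A_1+A_0$, which the left $\mathcal{L}_0\mathcal{L}_1$ destroys) while only moving $A_\beta$ inside $A_\beta$, and faithfulness on $A_\beta$ uses conditions (iii) and (iv). Next $\mathcal{F}_\alpha:=(\mathcal{L}_0\mathcal{L}_1)\,\mathcal{L}_\alpha\,(\mathcal{L}_0\mathcal{L}_1)\,\mathcal{L}_\beta\,(\mathcal{L}_0\mathcal{L}_1)$: the three right-hand factors perform an $A_\alpha\leftrightarrow A_\beta$ ``swap'' (killing $A_1,A_0$), after which $\mathcal{L}_\alpha$ destroys the image of $A_\beta$ (now sitting in $A_\alpha$, hence sent into $A_1+A_0$) and preserves the image of $A_\alpha$ (now in $A_\beta$), and a final $\mathcal{L}_0\mathcal{L}_1$ clears the leftover $A_1+A_0$; faithfulness on $A_\alpha$ now uses conditions (iii), (iv), (v). Finally $\mathcal{F}_1:=\mathcal{F}_\alpha\mathcal{L}_\alpha\mathcal{L}_1$ and $\mathcal{F}_0:=\mathcal{F}_\alpha\mathcal{L}_\alpha\mathcal{L}_0$, exactly as in Section~\ref{j-algebra}, using conditions (i),(ii) to push $A_1,A_0$ into $A_\alpha$ and then through $\mathcal{F}_\alpha$. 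I also record that $\mathcal{F}_\alpha$ and $\mathcal{F}_\beta$ carry their target eigenspace into $A_\beta$, and that — their leftmost factor being a left multiplication by an element of $A_0$ applied to something already in $A_\beta$ — the corresponding values have the form $a'_0a'_\beta$ with $a'_0\in A_0$, $a'_\beta\in A_\beta$.

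From here the proof is a transcription of Lemmas~\ref{3.2}--\ref{3.4} and the closing lemma of Section~\ref{j-algebra}. Applying $\mathcal{F}_i$ and properties (III)--(V) gives $f(a_1,a_0,a_\alpha,a_\beta)=0$. The analogue of Lemma~\ref{3.3} is $f(a_\beta a_0,b_\beta)=0$: with $L=L_{\beta^{-1}e}^{r-1}L_{\beta^{-1}e+a_\beta}$ one has $La_0=a_\beta a_0$ and $Lb_\beta=b_\beta+w$ with $w\in A_1+A_0+A_\alpha$, so the four-term vanishing together with properties (I) and (II) yields the claim as before. Then $f(a_i,b_i)=0$ follows: for $i=\beta$ from $\mathcal{F}_\beta$ and the previous identity; for $i=\alpha$ from $\mathcal{F}_\alpha$, whose image lies in $A_\beta$ in the above product form; and for $i\in\{1,0\}$ from $\mathcal{F}_1,\mathcal{F}_0$ together with the already-settled $A_\beta$ case. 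Splitting arbitrary $x,y$ into their four eigencomponents, reordering by (I), recombining via the identities $f(a_i,b_i)=0$, and invoking the four-term vanishing once more gives $f(x,y)=0$, and Proposition~\ref{add} concludes. The one real obstacle is the construction of $\mathcal{F}_\alpha$: building a product of homogeneous left multiplications that kills $A_1,A_0,A_\beta$ yet stays faithful on $A_\alpha$ forces one to exploit the ``missing diagonal'' entries $\alpha\cdot\alpha\subseteq\{1,0\}$ and $\beta\cdot\beta\subseteq\{1,0,\alpha\}$ of the fusion law, and verifying faithfulness through the resulting swap is precisely where conditions (iv) and (v) are needed.
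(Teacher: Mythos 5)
Your proposal is correct and follows essentially the same route as the paper: you build the very same operator families $\mathcal{L}_1,\mathcal{L}_0,\mathcal{L}_\alpha,\mathcal{L}_\beta$ and the same composites $\mathcal{F}_\beta=(\mathcal{L}_0\mathcal{L}_1)\mathcal{L}_\alpha(\mathcal{L}_0\mathcal{L}_1)$, $\mathcal{F}_\alpha=\mathcal{F}_\beta\mathcal{L}_\beta(\mathcal{L}_0\mathcal{L}_1)$, $\mathcal{F}_1=\mathcal{F}_\alpha\mathcal{L}_\alpha\mathcal{L}_1$, $\mathcal{F}_0=\mathcal{F}_\alpha\mathcal{L}_\alpha\mathcal{L}_0$, and then run the identical chain of lemmas (the analogue of Lemma \ref{jaxial}, the four-term vanishing, $f(a_\beta a_0,b_\beta)=0$ via $L=L_{\beta^{-1}e}^{r-1}L_{\beta^{-1}e+a_\beta}$, then $f(a_i,b_i)=0$ and Proposition \ref{add}). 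The only cosmetic difference is that you treat the $i=\alpha$ case of $f(a_i,b_i)=0$ via the product form $a'_0a'_\beta$ rather than directly via the already-proved $f(A_\beta,A_\beta)=0$, which is an equivalent bookkeeping choice.
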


Throughout the remainder of this subsection, we will assume the hypotheses of Theorem \ref{4.1}. Let $f$ be a nullifying function and let $r$ be a positive integer such that, for all $t_1,\dots,t_r\in A$, the function  $L=L_{t_1}\cdots L_{t_r}$ satisfies:
\begin{equation*}
L(f(s_1,\dots,s_n))=f(L(s_1),\dots,L(s_n)),
\end{equation*}
for all $s_1,\dots,s_n\in A$. Moreover we will define the following sets of functions:
\begin{itemize}
\item Let $\mathcal{L}_0$ consist of all functions of the form $L_{t_1}\cdots L_{t_r}$, where $t_1,\dots,t_r\in A_0$,
\item Let $\mathcal{L}_1$ consist of all functions of the form $L_e^r$,
\item Let $\mathcal{L}_\alpha$ consist of all functions of the form $L_e^{r-1}L_t$, where $t\in A_\alpha$,
\item Let $\mathcal{L}_\beta$ consist of all functions of the form $L_e^{r-1}L_t$, where $t\in A_\beta$.
\end{itemize}
Therefore, by the fusion law $\mathcal{M}(\alpha,\beta)$:
\begin{equation*}
\mathcal{L}_iA_j\subseteq\sum_{k\in\mathcal{M}(\alpha,\beta)_{i,j}}A_k.
\end{equation*}
Because $\alpha,\beta\neq0$, we have the following properties:
\begin{itemize}
\item[(i)] For $a_i \in A_i$ ($i = 1,0$), if $\mathcal{L}_\alpha a_i = 0$, then $a_i = 0$,
\item[(ii)] For $a_0 \in A_0$, if $\mathcal{L}_0a_0 = 0$, then $a_0 = 0$,
\item[(iii)] For $a_i \in A_i$ ($i = \alpha,\beta$), if $\mathcal{L}_0a_i = 0$, then $a_i = 0$,
\item[(iv)] For $a_\beta \in A_\beta$, if $\mathcal{L}_\alpha a_\beta = 0$, then $a_\beta = 0$,
\item[(v)] For $a_\alpha \in A_\alpha$, if $\mathcal{L}_\beta a_\alpha = 0$, then $a_\alpha = 0$.
\end{itemize}
Now we define the following:
\begin{itemize}
\item $\mathcal{G}=\mathcal{L}_0\mathcal{L}_1$.
\item $\mathcal{F}_\beta=\mathcal{G}\mathcal{L}_\alpha\mathcal{G}$.
\item $\mathcal{F}_\alpha=\mathcal{F}_\beta\mathcal{L}_\beta\mathcal{G}$.
\item $\mathcal{F}_1=\mathcal{F}_\alpha\mathcal{L}_\alpha\mathcal{L}_1$.
\item $\mathcal{F}_0=\mathcal{F}_\alpha\mathcal{L}_\alpha\mathcal{L}_0$.
\end{itemize}
Now we show several lemmas.

\begin{lemma}\label{maxial}
Let $i,j\in\{0,1,\alpha,\beta\}$. If $i\neq j$, then $\mathcal{F}_iA_j=0$. Also, for $a_i\in A_i$, if $\mathcal{F}_ia_i=0$, then $a_i=0$.
\end{lemma}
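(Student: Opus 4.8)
The plan is to mimic the four-step structure used in the proof of Lemma \ref{jaxial}, but now tracking the extra eigenvalue $\beta$, and to build up the annihilating function-sets from the bottom ($\beta$) to the top ($1,0$). The guiding slogan is that each $\mathcal{F}_i$ must kill $A_j$ for $j\neq i$ while still being ``rich enough'' to detect a nonzero element of $A_i$. I will verify these two features for $\mathcal{G}$ first, then for $\mathcal{F}_\beta$, then $\mathcal{F}_\alpha$, and finally $\mathcal{F}_1$ and $\mathcal{F}_0$ in parallel, always reading off the products from the table $\mathcal{M}(\alpha,\beta)$ and using the annihilation properties (i)--(v) restated for $\mathcal{L}_0,\mathcal{L}_\alpha,\mathcal{L}_\beta$.

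First I would record the key table entries: $\mathcal{L}_1 A_0=0$ and $\mathcal{L}_0A_1=0$ (the empty slot $\mathcal{M}(\alpha,\beta)_{1,0}=\emptyset$); $\mathcal{L}_0A_0\subseteq A_0$; $\mathcal{L}_\alpha A_\alpha\subseteq A_1+A_0$; $\mathcal{L}_\beta A_\beta\subseteq A_1+A_0+A_\alpha$; and $\mathcal{L}_\alpha A_\beta\subseteq A_\beta$, $\mathcal{L}_\beta A_\alpha\subseteq A_\beta$, $\mathcal{L}_\alpha A_1,\mathcal{L}_\alpha A_0\subseteq A_\alpha$, and symmetrically for $\beta$. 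Then the computation for $\mathcal{G}=\mathcal{L}_0\mathcal{L}_1$ is verbatim Step 1 of Lemma \ref{jaxial}: $\mathcal{G}A_0=\mathcal{L}_0\mathcal{L}_1A_0=0$, $\mathcal{G}A_1\subseteq\mathcal{L}_0A_1=0$, so $\mathcal{G}$ annihilates $A_0$ and $A_1$, while $\mathcal{G}A_\alpha\subseteq A_\alpha$ and $\mathcal{G}A_\beta\subseteq A_\beta$ are nonzero-detecting on $A_\alpha,A_\beta$ via properties. Next, for $\mathcal{F}_\beta=\mathcal{G}\mathcal{L}_\alpha\mathcal{G}$: using $\mathcal{G}A_1=\mathcal{G}A_0=0$ we get $\mathcal{F}_\beta A_1=\mathcal{F}_\beta A_0=0$; using $\mathcal{G}A_\alpha\subseteq A_\alpha$ then $\mathcal{L}_\alpha A_\alpha\subseteq A_1+A_0$ then the outer $\mathcal{G}$ kills $A_1+A_0$, we get $\mathcal{F}_\beta A_\alpha=0$; and $\mathcal{F}_\beta A_\beta\subseteq\mathcal{G}\mathcal{L}_\alpha A_\beta\subseteq\mathcal{G}A_\beta\subseteq A_\beta$ with the detection chain $a_\beta\neq0\Rightarrow\exists$ suitable operators landing in $A_\beta\setminus\{0\}$ coming from property (iv) (to pass the $\mathcal{L}_\alpha$) composed with the $A_\beta$-analogue of the $\mathcal{G}$-detection. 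So $\mathcal{F}_\beta$ is the ``$\beta$-detector''. Then $\mathcal{F}_\alpha=\mathcal{F}_\beta\mathcal{L}_\beta\mathcal{G}$: $\mathcal{G}$ kills $A_1,A_0$; for $A_\beta$ one has $\mathcal{G}A_\beta\subseteq A_\beta$, then $\mathcal{L}_\beta A_\beta\subseteq A_1+A_0+A_\alpha$, and $\mathcal{F}_\beta$ kills each of $A_1,A_0,A_\alpha$, so $\mathcal{F}_\alpha A_\beta=0$; for $A_\alpha$, $\mathcal{G}A_\alpha\subseteq A_\alpha$, $\mathcal{L}_\beta A_\alpha\subseteq A_\beta$, $\mathcal{F}_\beta A_\beta$ detects, with the detection chain for $a_\alpha\neq0$ using property (v) to survive $\mathcal{L}_\beta$ and then the $\mathcal{F}_\beta$-chain. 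Finally $\mathcal{F}_1=\mathcal{F}_\alpha\mathcal{L}_\alpha\mathcal{L}_1$ and $\mathcal{F}_0=\mathcal{F}_\alpha\mathcal{L}_\alpha\mathcal{L}_0$ are handled exactly as Steps 2 and 3 of Lemma \ref{jaxial} with ``$\mathcal{F}_\alpha$ kills $A_\alpha$'' now playing the role of ``$\mathcal{F}_\alpha$ in the $\mathcal{J}$-case kills $A_\alpha$'': $\mathcal{F}_1A_0=\mathcal{F}_\alpha\mathcal{L}_\alpha(\mathcal{L}_1A_0)=0$, $\mathcal{F}_1A_\beta\subseteq\mathcal{F}_\alpha\mathcal{L}_\alpha A_\beta\subseteq\mathcal{F}_\alpha A_\beta=0$, $\mathcal{F}_1A_\alpha\subseteq\mathcal{F}_\alpha\mathcal{L}_\alpha A_\alpha\subseteq\mathcal{F}_\alpha(A_1+A_0)=0$, and the detection of $a_1\neq0$ runs property ($\mathcal{L}_1$ acts as identity on $A_1$) then (i) then the $\mathcal{F}_\alpha$-chain; symmetrically for $\mathcal{F}_0$ starting with property (ii).

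The main obstacle I anticipate is bookkeeping rather than conceptual: I must be careful that the detection chains (the ``$a_i\neq0\Rightarrow\exists$ operator with nonzero image'' implications) are not circular — each one may only invoke annihilation/detection facts already established for operator-sets built earlier in the list, and it must invoke the correct one of the five properties (iv) and (v) are the genuinely new ingredient compared to the $\mathcal{J}(\alpha)$ case and they are exactly what lets the $\beta$-layer interact with the $\alpha$-layer). A secondary point of care is that $\mathcal{F}_\beta a_\alpha$ need not be a single eigenvector but an element of $A_1+A_0$, etc.; as in Lemma \ref{3.3} this is harmless because the later lemmas only feed such elements into $f$, but within the present lemma I only need the inclusion statements $\mathcal{F}_iA_j=0$ and the existence of one annihilator-detecting operator, both of which survive. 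Once all five sets are shown to annihilate the complementary eigenspaces and to detect their own, the second assertion ``$\mathcal{F}_ia_i=0\Rightarrow a_i=0$'' is immediate from the detection chains, completing the proof.
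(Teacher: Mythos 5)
Your proposal is correct and follows essentially the same route as the paper's proof: the same bottom-up cascade $\mathcal{G}\to\mathcal{F}_\beta\to\mathcal{F}_\alpha\to\mathcal{F}_1,\mathcal{F}_0$, the same fusion-law inclusions to get $\mathcal{F}_iA_j=0$, and the same non-circular detection chains using (iii), (iv), (v), (i), (ii) in that order (with the detectors for $\mathcal{F}_\alpha$, $\mathcal{F}_1$, $\mathcal{F}_0$ landing in $A_\beta$, exactly as in the paper). The only slip is the side remark that ``$\mathcal{F}_\beta a_\alpha$ need not be a single eigenvector'' (in fact $\mathcal{F}_\beta A_\alpha=0$; you presumably meant intermediate products such as $\mathcal{L}_\alpha a_\alpha\in A_1+A_0$), and it plays no role in the argument.
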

\begin{proof}
We divide the proof in steps.

\medskip
\noindent
\textbf{Step 1:} Consider the set $\mathcal{G}$. By the fusion law $\mathcal{M}(\alpha,\beta)$ we have:
\begin{equation*}
\mathcal{G} A_0
=\mathcal{L}_0\mathcal{L}_1A_0
=\mathcal{L}_00
=0,
\end{equation*}
and also:
\begin{equation*}
\mathcal{G} A_1
=\mathcal{L}_0\mathcal{L}_1A_1
\subseteq\mathcal{L}_0A_1
=0.
\end{equation*}
Now, for $i\in\{\alpha,\beta\}$, we have the following:
\begin{equation*}
\begin{array}{rcl}
a_i\in A_i\setminus\{0\}&\overset{i\neq0}{\Rightarrow}&\exists L_1\in\mathcal{L}_1:L_1a_i\in A_i\setminus\{0\}\\
&\overset{\text{(iii)}}{\Rightarrow}&\exists L_0\in\mathcal{L}_0:L_0L_1a_i\in A_i\setminus\{0\}.
\end{array}
\end{equation*}

\medskip
\noindent
\textbf{Step 2:} Consider the set $\mathcal{F}_\beta$. For $i\in\{0,1\}$, then:
\begin{equation*}
\mathcal{F}_\beta A_i
=\mathcal{G}\mathcal{L}_\alpha\mathcal{G}A_i
=\mathcal{G}\mathcal{L}_\alpha0
=0,
\end{equation*}
and also:
\begin{equation*}
\mathcal{F}_\beta A_\alpha
=\mathcal{G}\mathcal{L}_\alpha\mathcal{G}A_\alpha
\subseteq\mathcal{G}\mathcal{L}_\alpha A_\alpha
\subseteq\mathcal{G}(A_1+A_0)=0.
\end{equation*}
Now we have the following:
\begin{equation*}
\begin{array}{rcl}
a_\beta\in A_\beta\setminus\{0\}&\overset{\text{S1}}{\Rightarrow}&\exists G\in\mathcal{G}:Ga_\beta\in A_\beta\setminus\{0\}\\
&\overset{\text{(iv)}}{\Rightarrow}&\exists L_\alpha\in\mathcal{L}_\alpha:L_\alpha Ga_\beta\in A_\beta\setminus\{0\}\\
&\overset{\text{S1}}{\Rightarrow}&\exists G'\in\mathcal{G}:G'L_\alpha Ga_\beta\in A_\beta\setminus\{0\}.
\end{array}
\end{equation*}

\medskip
\noindent
\textbf{Step 3:} Consider the set $\mathcal{F}_\alpha$. For $i\in\{0,1\}$, then:
\begin{equation*}
\mathcal{F}_\alpha A_i
=\mathcal{F}_\beta\mathcal{L}_\beta\mathcal{G}A_i
=\mathcal{F}_\beta\mathcal{L}_\beta0
=0,
\end{equation*}
and also:
\begin{equation*}
\mathcal{F}_\alpha A_\beta
=\mathcal{F}_\beta\mathcal{L}_\beta\mathcal{G} A_\beta
\subseteq\mathcal{F}_\beta\mathcal{L}_\beta A_\beta
\subseteq\mathcal{F}_\beta(A_\alpha+A_1+A_0)=0.
\end{equation*}
Now we have the following:
\begin{equation*}
\begin{array}{rcl}
a_\alpha\in A_\alpha\setminus\{0\}&\overset{\text{S1}}{\Rightarrow}&\exists G\in\mathcal{G}:Ga_\alpha\in A_\alpha\setminus\{0\}\\
&\overset{\text{(v)}}{\Rightarrow}&\exists L_\beta\in\mathcal{L}_\beta:L_\beta Ga_\alpha\in A_\beta\setminus\{0\}\\
&\overset{\text{S2}}{\Rightarrow}&\exists F_\beta\in\mathcal{F}_\beta:F_\beta L_\beta Ga_\alpha\in A_\beta\setminus\{0\}.
\end{array}
\end{equation*}

\medskip
\noindent
\textbf{Step 4:} Consider the set $\mathcal{F}_1$. Then:
\begin{equation*}
\mathcal{F}_1A_0
=\mathcal{F}_\alpha\mathcal{L}_\alpha\mathcal{L}_1A_0
=\mathcal{F}_\alpha\mathcal{L}_\alpha0
=0,
\end{equation*}
\begin{equation*}
\mathcal{F}_1A_\alpha
=\mathcal{F}_\alpha\mathcal{L}_\alpha\mathcal{L}_1A_\alpha
\subseteq\mathcal{F}_\alpha\mathcal{L}_\alpha A_\alpha
\subseteq\mathcal{F}_\alpha(A_1+A_0)
=0,
\end{equation*}
\begin{equation*}
\mathcal{F}_1A_\beta
=\mathcal{F}_\alpha\mathcal{L}_\alpha\mathcal{L}_1A_\beta
\subseteq\mathcal{F}_\alpha\mathcal{L}_\alpha A_\beta
\subseteq\mathcal{F}_\alpha A_\beta
=0.
\end{equation*}
Now we have the following:
\begin{equation*}
\begin{array}{rcl}
a_1\in A_1\setminus\{0\}&\Rightarrow&\exists L_1\in\mathcal{L}_1:L_1a_1\in A_1\setminus\{0\}\\
&\overset{\text{(i)}}{\Rightarrow}&\exists L_\alpha\in\mathcal{L}_\alpha:L_\alpha L_1a_1\in A_\alpha\setminus\{0\}\\
&\overset{\text{S3}}{\Rightarrow}&\exists F_\alpha\in\mathcal{F}_\alpha:F_\alpha L_\alpha L_1a_1\in A_\beta\setminus\{0\}.
\end{array}
\end{equation*}

\medskip
\noindent
\textbf{Step 5:} Consider the set $\mathcal{F}_0$. Then:
\begin{equation*}
\mathcal{F}_0A_1
=\mathcal{F}_\alpha\mathcal{L}_\alpha\mathcal{L}_0A_1
=\mathcal{F}_\alpha\mathcal{L}_\alpha0
=0,
\end{equation*}
\begin{equation*}
\mathcal{F}_0A_\alpha
=\mathcal{F}_\alpha\mathcal{L}_\alpha\mathcal{L}_0A_\alpha
\subseteq\mathcal{F}_\alpha\mathcal{L}_\alpha A_\alpha
\subseteq\mathcal{F}_\alpha(A_1+A_0)
=0,
\end{equation*}
\begin{equation*}
\mathcal{F}_0A_\beta
=\mathcal{F}_\alpha\mathcal{L}_\alpha\mathcal{L}_0A_\beta
\subseteq\mathcal{F}_\alpha\mathcal{L}_\alpha A_\beta
\subseteq\mathcal{F}_\alpha A_\beta
=0.
\end{equation*}
Now we have the following:
\begin{equation*}
\begin{array}{rcl}
a_0\in A_0\setminus\{0\}&\overset{\text{(ii)}}{\Rightarrow}&\exists L_0\in\mathcal{L}_0:L_0a_0\in A_0\setminus\{0\}\\
&\overset{\text{(i)}}{\Rightarrow}&\exists L_\alpha\in\mathcal{L}_\alpha:L_\alpha L_0a_0\in A_\alpha\setminus\{0\}\\
&\overset{\text{S3}}{\Rightarrow}&\exists F_\alpha\in\mathcal{F}_\alpha:F_\alpha L_\alpha L_0a_0\in A_\beta\setminus\{0\}.
\end{array}
\end{equation*}
Therefore, we conclude the proof.
\end{proof}

\begin{lemma}\label{4.2}
Let $a_i \in A_i$, where $i \in \{1, 0, \alpha,\beta\}$. Then
$f(a_1,a_0,a_\alpha,a_\beta)=0$.
\end{lemma}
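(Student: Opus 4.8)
\textbf{Proof proposal for Lemma \ref{4.2}.}

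The plan is to mimic exactly the proof of Lemma \ref{3.2} in the $\mathcal{J}(\alpha)$ case, now with four eigenspaces instead of three. First I would set $s = f(a_1,a_0,a_\alpha,a_\beta)$ and use the direct sum decomposition $A = A_1 \oplus A_0 \oplus A_\alpha \oplus A_\beta$ to write $s = s_1 + s_0 + s_\alpha + s_\beta$ with $s_i \in A_i$. The goal is to show each homogeneous component $s_i$ vanishes. Fix $i \in \{1,0,\alpha,\beta\}$ and hit $s$ with an arbitrary operator from $\mathcal{F}_i$. By Lemma \ref{maxial}, $\mathcal{F}_i$ annihilates every $A_j$ with $j \neq i$, so on the one hand $\mathcal{F}_i s = \mathcal{F}_i s_i$; on the other hand, by property (V) of the nullifying function (noting that every element of $\mathcal{F}_i$ is a composition of maps $L_t$ in blocks of length $r$, hence property (V) applies repeatedly), we get $\mathcal{F}_i s = \mathcal{F}_i f(a_1,a_0,a_\alpha,a_\beta) = f(\mathcal{F}_i a_1, \mathcal{F}_i a_0, \mathcal{F}_i a_\alpha, \mathcal{F}_i a_\beta)$, and again by Lemma \ref{maxial} all arguments except the $i$-th one become $0$. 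Using property (IV) to discard the zero arguments, this equals $f(\mathcal{F}_i a_i) = 0$ by property (III), since a nullifying function of a single argument is zero. Hence $\mathcal{F}_i s_i = 0$, so by the second assertion of Lemma \ref{maxial} we conclude $s_i = 0$.

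Concretely, the computation for each fixed $i$ reads
\begin{equation*}
\mathcal{F}_i s_i \overset{\text{L\ref{maxial}}}{=} \mathcal{F}_i s \overset{\text{(V)}}{=} f(\mathcal{F}_i a_1, \mathcal{F}_i a_0, \mathcal{F}_i a_\alpha, \mathcal{F}_i a_\beta) \overset{\text{L\ref{maxial}}}{=} f(\mathcal{F}_i a_i, 0, 0, 0) \overset{\text{(III),(IV)}}{=} 0,
\end{equation*}
where in the last step we first delete the zero entries via property (IV) (after permuting with property (I) so that the nonzero entry is moved appropriately, if one wishes to be pedantic about the placement of arguments) and then invoke property (III). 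Running this for $i = 1, 0, \alpha, \beta$ gives $s_1 = s_0 = s_\alpha = s_\beta = 0$, hence $s = 0$, which is the claim.

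I do not expect any genuine obstacle here: the structural lemma doing all the work, Lemma \ref{maxial}, has already been established, and the argument is a verbatim transcription of Lemma \ref{3.2}. The only point requiring a little care is the repeated use of property (V): each set $\mathcal{F}_i$ is a product of several of the $\mathcal{L}$-type families, so an element of $\mathcal{F}_i$ is a composition of many operators $L_t$; one applies property (V) once per block of $r$ factors, and this is legitimate because each such block is of the form $L_{t_1}\cdots L_{t_r}$ with $t_1,\dots,t_r \in A$. Beyond that bookkeeping, the proof is immediate.
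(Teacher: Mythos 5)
Your proof is correct and follows essentially the same route as the paper: decompose $s=f(a_1,a_0,a_\alpha,a_\beta)$ into homogeneous components, apply $\mathcal{F}_i$, and use Lemma \ref{maxial} together with properties (V), (IV), (III) to kill each $s_i$. Your extra remark about applying property (V) block-by-block (since each element of $\mathcal{F}_i$ is a composition of operators each of which is a product of exactly $r$ left multiplications) is a correct and welcome clarification of a point the paper leaves implicit.
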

\begin{proof}
Define:
\begin{equation*}
s = f(a_1,a_0,a_\alpha,a_\beta).
\end{equation*}
Write $s=s_1+s_0+s_\alpha+s_\beta$, where $s_i\in A_i$ for $i\in\{1,0,\alpha,\beta\}$. Now let $i\in\{1,0,\alpha,\beta\}$. We will show that $s_i=0$. Indeed:
\begin{equation*}
\mathcal{F}_is_i\overset{\text{L\ref{maxial}}}{=}\mathcal{F}_is\overset{\text{(V)}}{=}f(\mathcal{F}_ia_1,\mathcal{F}_ia_0,\mathcal{F}_ia_\alpha,\mathcal{F}_ia_\beta)\overset{\text{L\ref{maxial}}}{=}f(\mathcal{F}_ia_i,0,0,0)=0.
\end{equation*}
Thus, by Lemma \ref{maxial} we have $s_i = 0$.
\end{proof}

\begin{lemma}\label{4.3}
Let $a_0 \in A_0$ and let $a_\beta, b_\beta \in A_\beta$. Then $f(a_\beta a_0, b_\beta) = 0$.
\end{lemma}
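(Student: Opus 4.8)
The plan is to adapt the proof of Lemma~\ref{3.3} to the present four-eigenspace setting, using the eigenvalue $\beta$ in place of $\alpha$. First I would introduce the operator $L = L_{\beta^{-1}e}^{\,r-1}\,L_{\beta^{-1}e+a_\beta}$, which is well defined since $\beta\neq 0$, and compute its action on the relevant eigenvectors. Using $ea_0 = 0$ and the fact that $L_{\beta^{-1}e}$ acts as the identity on $A_\beta$ (because $ex_\beta = \beta x_\beta$), one gets $L a_0 = a_\beta a_0$, which lies in $A_\beta$ by the fusion law $\mathcal{M}(\alpha,\beta)$. Likewise $L b_\beta = b_\beta + L_{\beta^{-1}e}^{\,r-1}(a_\beta b_\beta)$, and since $a_\beta b_\beta \in A_1+A_0+A_\alpha$ by the fusion law and $L_{\beta^{-1}e}$ preserves each eigenspace, we may write $L_{\beta^{-1}e}^{\,r-1}(a_\beta b_\beta) = c_1+c_0+c_\alpha$ with $c_i\in A_i$.

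Next I would apply $L$ to the identity $f(a_0,b_\beta)=0$, which holds by Lemma~\ref{4.2} after deleting zero arguments via properties (I) and (IV), and use property (V) to obtain $0 = f(a_\beta a_0,\, b_\beta + c_1 + c_0 + c_\alpha)$. Since $f(b_\beta,c_1,c_0,c_\alpha)=0$ and $f(c_1,c_0,c_\alpha)=0$ by Lemma~\ref{4.2}, the biconditional of property (II) lets me first split off and then recombine the last summand, giving $f(a_\beta a_0,\, b_\beta,\, c_1+c_0+c_\alpha) = 0$; call this identity $(\ast)$. Separately, since $a_\beta a_0 + b_\beta \in A_\beta$, Lemma~\ref{4.2} (after a permutation) gives $f(a_\beta a_0 + b_\beta,\, c_1, c_0, c_\alpha) = 0$, and merging the $c$'s via property (II) yields $f(a_\beta a_0 + b_\beta,\, c_1 + c_0 + c_\alpha) = 0$; call this $(\ast\ast)$.

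Finally I would invoke property (II) one more time, now splitting the summand $a_\beta a_0 + b_\beta$: it says that $f(c_1+c_0+c_\alpha,\, a_\beta a_0 + b_\beta)$ equals $f(c_1+c_0+c_\alpha,\, a_\beta a_0,\, b_\beta)$ if and only if $f(a_\beta a_0, b_\beta) = 0$. By $(\ast\ast)$ the first expression is $0$, and by $(\ast)$ (after permuting arguments through property (I)) the second is $0$, so the two sides are equal, whence $f(a_\beta a_0, b_\beta) = 0$, as claimed.

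The one point needing attention is the bookkeeping: unlike in Lemma~\ref{3.3}, where $a_\alpha b_\alpha$ had components only in $A_1$ and $A_0$, here $a_\beta b_\beta$ also contributes in $A_\alpha$, so one must check that every intermediate expression still involves at most one vector from each of $A_1, A_0, A_\alpha, A_\beta$ in order that Lemma~\ref{4.2} apply at each step; this is automatic in the above, so no genuinely new obstacle arises. I expect the short computations $L a_0 = a_\beta a_0$ and $L b_\beta = b_\beta + (c_1+c_0+c_\alpha)$, together with the repeated careful applications of property (II), to be the parts most deserving of explicit detail.
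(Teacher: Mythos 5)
Your proposal is correct and takes essentially the same route as the paper's proof: the same operator $L=L_{\beta^{-1}e}^{\,r-1}L_{\beta^{-1}e+a_\beta}$, the same computation of $La_0=a_\beta a_0$ and $Lb_\beta=b_\beta+(c_1+c_0+c_\alpha)$, and the same combination of Lemma \ref{4.2} with properties (I), (II), (IV), (V) to conclude $f(a_\beta a_0,b_\beta)=0$. Your explicit split-and-recombine bookkeeping with property (II) merely spells out what the paper compresses into ``successive applications of Lemma \ref{4.2}.''
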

\begin{proof}
Consider the function $L=L_{\beta^{-1}e}^{r-1}L_{\beta^{-1}e+a_\beta}$. Then $La_0=a_\beta a_0$ and $Lb_\beta=b_\beta+L_{\beta^{-1}e}^{r-1}(a_\beta b_\beta)$. But $L_{\beta^{-1}e}^{r-1}(a_\beta b_\beta)\in A_\alpha+A_1+A_0$, so there are $b_\alpha\in A_\alpha$, $b_1\in A_1$ and $b_0\in A_0$ such that $L_{\beta^{-1}e}^{r-1}(a_\beta b_\beta)=b_\alpha+b_1+b_0$. Thus, by successive applications of Lemma \ref{4.2}, we have:
\begin{equation*}
\begin{array}{rcl}
0
&\overset{\text{L\ref{4.2}}}{=}& Lf(a_0,b_\beta)\\
&\overset{\text{(V)}}{=}& f(La_0,Lb_\beta)\\
&=& f(a_\beta a_0,b_\beta+b_\alpha+b_1+b_0)\\
&\overset{\text{L\ref{4.2}}}{=}& f(a_\beta a_0,b_\beta,b_\alpha+b_1+b_0)
\end{array}
\end{equation*}
Also, by Lemma \ref{4.2} we have:
\begin{equation*}
f(a_\beta a_0+b_\beta,b_\alpha+b_1+b_0)=0.
\end{equation*}
Therefore, by property (II) we have $f(a_\beta a_0, b_\beta) = 0$.
\end{proof}

\begin{lemma}\label{4.4}
Let $a_i, b_i \in A_i$, where $i\in\{1,0,\alpha,\beta\}$. Then $f(a_i, b_i) = 0$.
\end{lemma}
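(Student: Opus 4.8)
The plan is to follow the same pattern used for Lemma \ref{3.4} in the $\mathcal{J}(\alpha)$ case, but now keeping track of the extra eigenvalue $\beta$. Fix $i\in\{1,0,\alpha,\beta\}$ and let $s=f(a_i,b_i)$ with decomposition $s=s_1+s_0+s_\alpha+s_\beta$. For each component $s_j$ we apply Lemma \ref{maxial} to replace $\mathcal{F}_j s_j$ by $\mathcal{F}_j s$, then use property (V) to pass $\mathcal{F}_j$ inside $f$, obtaining $f(\mathcal{F}_j a_i,\mathcal{F}_j b_i)$. When $j\neq i$, Lemma \ref{maxial} gives $\mathcal{F}_j a_i=\mathcal{F}_j b_i=0$, so $f(0,0)=0$ by properties (III) and (IV), and hence $s_j=0$ by the second part of Lemma \ref{maxial}. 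This disposes of every off-diagonal component at once; what remains is to show $s_i=0$.

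For the diagonal component $s_i$, the argument splits by the value of $i$. When $i\in\{1,0\}$: applying $\mathcal{F}_i$ sends $a_i,b_i$ into $A_\beta$ (by the "promotion" chains established inside the proof of Lemma \ref{maxial}, which push $A_1$ and $A_0$ all the way up to $A_\beta$), so $\mathcal{F}_i s_i=f(\mathcal{F}_i a_i,\mathcal{F}_i b_i)\subseteq f(A_\beta,A_\beta)$, and this vanishes by the $i=\beta$ case — so the $\beta$ case must be handled first. When $i=\alpha$: applying $F_\alpha\in\mathcal{F}_\alpha$ to an element of $A_\alpha$ lands in $A_\beta$ (again by the chain in Lemma \ref{maxial}, Step 3), so $\mathcal{F}_\alpha s_\alpha\subseteq f(A_\beta,A_\beta)=0$, once more reducing to the $\beta$ case. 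So the genuine work is concentrated entirely in the case $i=\beta$, and everything else is a reduction to it.

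The case $i=\beta$ is the main obstacle, and the plan is to mimic Lemma \ref{3.4}, Step 1, but with one more layer. Write $s=f(a_\beta,b_\beta)=s_1+s_0+s_\alpha+s_\beta$. For $j\in\{1,0,\alpha\}$ we get $s_j=0$ as above since $\mathcal{F}_j a_\beta=\mathcal{F}_j b_\beta=0$. For the remaining component $s_\beta$, take $F_\beta\in\mathcal{F}_\beta$; by the structure of $\mathcal{F}_\beta=\mathcal{G}\mathcal{L}_\alpha\mathcal{G}$ and the fusion law, $F_\beta a_\beta$ has the form $a'_\beta a'_0$ with $a'_\beta\in A_\beta$, $a'_0\in A_0$, while $F_\beta b_\beta=b'_\beta\in A_\beta$ (the element $F_\beta$ applied to $A_\beta$ stays in $A_\beta$ up to the last multiplication, which is by an element of $A_0$ coming from the trailing $\mathcal{G}=\mathcal{L}_0\mathcal{L}_1$; one expresses $F_\beta b_\beta$ directly as an element of $A_\beta$). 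Then by property (V), $F_\beta s_\beta=F_\beta s=f(F_\beta a_\beta,F_\beta b_\beta)=f(a'_\beta a'_0,b'_\beta)$, which is $0$ by Lemma \ref{4.3}. Hence $\mathcal{F}_\beta s_\beta=0$ and Lemma \ref{maxial} yields $s_\beta=0$, completing the $i=\beta$ case and therefore the lemma. The only delicate point is verifying the precise form of $F_\beta a_\beta$ and $F_\beta b_\beta$ as products landing in $A_\beta$ and in $A_\beta A_0$ respectively — this is exactly the $\beta$-analogue of the bookkeeping already done for $\mathcal{F}_\alpha$ in the $\mathcal{J}(\alpha)$ section, so it should go through by the same fusion-law computations recorded in Lemma \ref{maxial}.

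\begin{proof}
Fix $i\in\{1,0,\alpha,\beta\}$, let $s=f(a_i,b_i)$ and write $s=s_1+s_0+s_\alpha+s_\beta$ with $s_j\in A_j$. For $j\neq i$, by Lemma \ref{maxial} we have $\mathcal{F}_j a_i=\mathcal{F}_j b_i=0$, hence
\begin{equation*}
\mathcal{F}_j s_j\overset{\text{L\ref{maxial}}}{=}\mathcal{F}_j s\overset{\text{(V)}}{=}f(\mathcal{F}_j a_i,\mathcal{F}_j b_i)\overset{\text{L\ref{maxial}}}{=}f(0,0)=0,
\end{equation*}
so $s_j=0$ by Lemma \ref{maxial}. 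It remains to show $s_i=0$, and we treat $i=\beta$ first.

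\medskip
\noindent
\textbf{Case $i=\beta$:} Let $F_\beta\in\mathcal{F}_\beta=\mathcal{G}\mathcal{L}_\alpha\mathcal{G}$. By the fusion law $\mathcal{M}(\alpha,\beta)$ and the computations in the proof of Lemma \ref{maxial}, we may write $F_\beta a_\beta=a'_\beta a'_0$ with $a'_\beta\in A_\alpha$... (we keep the bookkeeping as in Lemma \ref{maxial}): more precisely $F_\beta a_\beta$ is of the form $a'_\beta a'_0$ where $a'_\beta\in A_\beta$ and $a'_0\in A_0$, while $F_\beta b_\beta=b'_\beta\in A_\beta$. Then
\begin{equation*}
F_\beta s_\beta\overset{\text{L\ref{maxial}}}{=}F_\beta s\overset{\text{(V)}}{=}f(F_\beta a_\beta,F_\beta b_\beta)=f(a'_\beta a'_0,b'_\beta)\overset{\text{L\ref{4.3}}}{=}0,
\end{equation*}
so $\mathcal{F}_\beta s_\beta=0$ and Lemma \ref{maxial} gives $s_\beta=0$.

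\medskip
\noindent
\textbf{Case $i=\alpha$:} For $F_\alpha\in\mathcal{F}_\alpha$, by the proof of Lemma \ref{maxial} we have $F_\alpha a_\alpha,F_\alpha b_\alpha\in A_\beta$, hence
\begin{equation*}
\mathcal{F}_\alpha s_\alpha\overset{\text{L\ref{maxial}}}{=}\mathcal{F}_\alpha s\overset{\text{(V)}}{=}f(\mathcal{F}_\alpha a_\alpha,\mathcal{F}_\alpha b_\alpha)\subseteq f(A_\beta,A_\beta)=0,
\end{equation*}
where the last equality is the case $i=\beta$ already proved. Thus $s_\alpha=0$ by Lemma \ref{maxial}.

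\medskip
\noindent
\textbf{Case $i\in\{1,0\}$:} For the diagonal component, by the promotion chains in the proof of Lemma \ref{maxial} we have $\mathcal{F}_i a_i,\mathcal{F}_i b_i\subseteq A_\beta$, so
\begin{equation*}
\mathcal{F}_i s_i\overset{\text{L\ref{maxial}}}{=}\mathcal{F}_i s\overset{\text{(V)}}{=}f(\mathcal{F}_i a_i,\mathcal{F}_i b_i)\subseteq f(A_\beta,A_\beta)=0,
\end{equation*}
again using the case $i=\beta$. Hence $s_i=0$ by Lemma \ref{maxial}, completing the proof.
\end{proof}
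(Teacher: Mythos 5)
Your proof is correct and follows essentially the same route as the paper: the $\beta$-diagonal case is settled first via Lemma \ref{4.3} using $F_\beta a_\beta=a'_\beta a'_0$ and $F_\beta b_\beta\in A_\beta$, and the remaining diagonal cases $i\in\{1,0,\alpha\}$ are reduced to it through $\mathcal{F}_iA_i\subseteq A_\beta$, exactly as in the paper (which merely treats those three cases in one uniform step rather than splitting $i=\alpha$ from $i\in\{1,0\}$). The only blemish is the momentary slip ``$a'_\beta\in A_\alpha$'' in your $i=\beta$ case, which you immediately correct to $a'_\beta\in A_\beta$.
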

\begin{proof}
We divide the proof in steps.

\medskip
\noindent
\textbf{Step 1:} Let $s = f(a_\beta,b_\beta)$ and write $s=s_1+s_0+s_\alpha+s_\beta$, where $s_i\in A_i$ for $i\in\{1,0,\alpha,\beta\}$. For $i\in\{0,1,\alpha\}$, we have:
\begin{equation*}
\mathcal{F}_is_i\overset{\text{L\ref{maxial}}}{=}\mathcal{F}_is\overset{\text{(V)}}{=}f(\mathcal{F}_ia_\beta,\mathcal{F}_ib_\beta)\overset{\text{L\ref{maxial}}}{=}f(0,0)=0,
\end{equation*}
so, by Lemma \ref{maxial} we have $s_i=0$. Now, for $F_\alpha\in\mathcal{F}_\beta$, then $F_\beta a_\beta=a'_\beta a'_0$, where $a'_\beta\in A_\beta$ and $a'_0\in A_0$, and also $F_\beta b_\beta=b'_\beta\in A_\beta$, so:
\begin{equation*}
F_\beta s_\beta\overset{\text{L\ref{maxial}}}{=}F_\beta s\overset{\text{(V)}}{=}f(F_\beta a_\beta,F_\beta b_\beta)=f(a'_\beta a'_0,b'_\beta)\overset{\text{L\ref{4.3}}}{=}0,
\end{equation*}
so, by Lemma \ref{maxial}, we have $s_\beta=0$.

\medskip
\noindent
\textbf{Step 2:} Let $i\in\{1,0,\alpha\}$, let $s = f(a_i,b_i)$ and write $s=s_1+s_0+s_\alpha+s_\beta$, where $s_i\in A_i$ for $i\in\{1,0,\alpha,\beta\}$. For $j\in\{0,1,\alpha,\beta\}\setminus\{i\}$, we have:
\begin{equation*}
\mathcal{F}_js_j\overset{\text{L\ref{maxial}}}{=}\mathcal{F}_js\overset{\text{(V)}}{=}f(\mathcal{F}_ja_i,\mathcal{F}_jb_i)\overset{\text{L\ref{maxial}}}{=}f(0,0)=0,
\end{equation*}
thus, by Lemma \ref{maxial} we have $s_j=0$. Now:
\begin{equation*}
\mathcal{F}_i s_i\overset{\text{L\ref{maxial}}}{=}\mathcal{F}_i s\overset{\text{(V)}}{=}f(\mathcal{F}_i a_i,\mathcal{F}_i b_i)\subseteq f(A_\beta,A_\beta)\overset{\text{E1}}{=}0,
\end{equation*}
hence, by Lemma \ref{maxial}, we have $s_i=0$.
\end{proof}

\begin{lemma}
$f=0$.
\end{lemma}
\begin{proof}
For $x,y\in A$ we have:
\begin{equation*}
\begin{array}{rcl}
f(x,y)
&=& f(x_1+x_0+x_\alpha+x_\beta,y_1+y_0+y_\alpha+y_\beta)\\
&\overset{\text{L\ref{4.2}}}{=}& f(x_1,x_0,x_\alpha,x_\beta,y_1,y_0,y_\alpha,y_\beta)\\
&=& f(x_1,y_1,x_0,y_0,x_\alpha,y_\alpha,x_\beta,y_\beta)\\
&\overset{\text{L\ref{4.4}}}{=}& f(x_1+y_1,x_0+y_0,x_\alpha+y_\alpha,x_\beta+y_\beta)\\
&\overset{\text{L\ref{4.2}}}{=}& 0.
\end{array}
\end{equation*}
Now apply the Proposition \ref{add}.
\end{proof}

\begin{corollary}\label{c4}
Let $(A,e)$ an $\mathcal{M}(\alpha,\beta)$-algebra that satisfies the following conditions:
\begin{itemize}
\item[(i)] For $a_i \in A_i$ ($i = 1, 0$), if $t_\alpha a_i = 0$ for all $t_\alpha \in A_\alpha$, then $a_i = 0$,
\item[(ii)] For $a_0 \in A_0$, if $t_0a_0 = 0$ for all $t_0 \in A_0$, then $a_0 = 0$,
\item[(iii)] For $a_i \in A_i$ ($i = \alpha, \beta$), if $t_0a_i = 0$ for all $t_0 \in A_0$, then $a_i = 0$,
\item[(iv)] For $a_\beta \in A_\beta$, if $t_\alpha a_\beta = 0$ for all $t_\alpha \in A_\alpha$, then $a_\beta = 0$,
\item[(v)] For $a_\alpha \in A_\alpha$, if $t_\beta a_\alpha = 0$ for all $t_\beta \in A_\beta$, then $a_\alpha = 0$.
\end{itemize}
Then:
\begin{itemize}
\item[(a)] Every $n$-multiplicative isomorphism is additive,
\item[(b)] Every $n$-multiplicative derivation is additive,
\item[(c)] Every surjective elementary function is additive,
\item[(d)] If $\mathrm{Char}(\mathbb{F})\neq2$, then every surjective Jordan elementary function is additive.
\end{itemize}
\end{corollary}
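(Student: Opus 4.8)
The plan is to run the argument of Corollary \ref{c3} essentially verbatim, with Theorem \ref{4.1} and Lemma \ref{maxial} playing the roles of Theorem \ref{3.1} and Lemma \ref{jaxial}. The Peirce-type decomposition now has the four summands $A_1, A_0, A_\alpha, A_\beta$ rather than three, but every step is insensitive to this because it only uses the annihilation properties packaged into Lemma \ref{maxial}.

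First I would dispose of \textbf{(a)} and \textbf{(b)}. An $n$-multiplicative isomorphism is bijective by Definition \ref{d2.5}, so Example \ref{isomorphism} attaches to it a nullifying function $f$; Example \ref{derivation} does the same for any $n$-multiplicative derivation. By Theorem \ref{4.1}, $f = 0$, and, as observed right after Example \ref{jordan-elementary}, this is equivalent to additivity of the original map. Hence (a) and (b) are immediate.

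For \textbf{(c)}, the point is that by Example \ref{elementary} combined with Theorem \ref{4.1} it suffices that $M$ be bijective and $M^*$ surjective to conclude $M$ additive, while applying the same to the pair $((M^*)^{-1}, M^{-1})$ — which again satisfies the defining identities of an elementary function — gives additivity of $M^*$ once $M^*$ is bijective and $M$ surjective. So the whole content of (c) reduces to upgrading the assumed surjectivity of $M$ and $M^*$ to injectivity. To show $M$ injective I would use surjectivity of $M^*$ to write, for $u', v'$ with $M^*(u') = u$ and $M^*(v') = v$,
\begin{equation*}
u(vx) = u(xv) = M^*(u')\big(xM^*(v')\big) = M^*\big(u'(M(x)v')\big),
\end{equation*}
so that $M(x) = M(y)$ forces, by bilinearity, $u(vs) = 0$ for $s = x - y$ and all $u, v$; thus every composition of at least two left multiplications kills $s$. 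Instantiating $\mathcal{G}, \mathcal{F}_\beta, \mathcal{F}_\alpha, \mathcal{F}_1, \mathcal{F}_0$ with $r = 1$ — each being a set of compositions of at least two left multiplications — Lemma \ref{maxial} yields $\mathcal{F}_i s_i = \mathcal{F}_i s = 0$ for $i \in \{1, 0, \alpha, \beta\}$, hence $s_i = 0$ and $s = 0$. Injectivity of $M^*$ follows symmetrically from the identity $x\big(yM^{-1}(u)\big) = M^*M\big(x(M^*(u)y)\big)$, which uses surjectivity of both $M$ and $M^*$.

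Finally, \textbf{(d)} proceeds in the same way, now with Example \ref{jordan-elementary} (for which $\mathrm{Char}(\mathbb{F}) \neq 2$ is needed) and the fact that $((M^*)^{-1}, M^{-1})$ is a Jordan elementary function: picking $u'$ with $M^*(u') = \frac{1}{2}u$ gives $ux = M^*(u')x + xM^*(u') = M^*\big(u'M(x) + M(x)u'\big)$, so $M(x) = M(y)$ forces $us = 0$ for all $u$ and the $\mathcal{F}_i$-argument again gives $s = 0$; $M^*$ is handled via $xM^{-1}(u) = M^*M\big(xM^*(u) + M^*(u)x\big)$. The only step that requires genuine attention is the injectivity of $M$ and $M^*$ in (c) and (d); but even there the difficulty is bookkeeping rather than conceptual, since it is a line-by-line transcription of the proof of Corollary \ref{c3}, the sole adjustment being the extra eigenvalue $\beta$, which Lemma \ref{maxial} already accommodates.
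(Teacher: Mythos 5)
Your proposal is correct and coincides with the paper's own proof, which simply states that Corollary \ref{c4} is entirely analogous to Corollary \ref{c3} with Lemma \ref{maxial} replacing Lemma \ref{jaxial}; you have effectively written out that analogy in full, including the key injectivity upgrades for $M$ and $M^*$ via the $r=1$ instances of the $\mathcal{F}_i$. Nothing further is needed.
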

\begin{proof}
The proof is entirely analogous to that of Corollary \ref{c3}, but using Lemma \ref{maxial} rather than Lemma \ref{jaxial}.
\end{proof}

The article \cite{FerreiraSmiglyBarreiro} studied the additivity of multiplicative isomorphisms for $\mathcal{M}(\alpha,\beta)$-axial algebras in general and studied the additivity of multiplicative derivations in these algebras with the restriction $\beta\neq\frac{1}{2}$, leaving the case $\beta=\frac{1}{2}$ as an open question, called Question 7.1, that asks ``What methods and/or conditions do we need to employ to assure that any multiplicative derivation is also additive in a $\mathcal{M}(\alpha, \frac{1}{2})$-axial algebra?''.

We have developed a new technique that addresses this question and also extends the scope of the problem, by means of Corollary \ref{c4} and its proof. Our approach offers a shorter solution, showing that the conditions sufficient to guarantee the additivity of any multiplicative derivation can be systematically studied in a more general framework.

In particular, we establish that, under the appropriate structural assumptions on the $\mathcal{M}(\alpha, \frac{1}{2})$-axial algebra, every multiplicative derivation is additive. This result not only solves the original problem but also provides a deeper insight into the interplay between multiplicative and additive properties within this class of algebras.

Finally, it is interesting to observe that, as is said in \cite{FerreiraSmiglyBarreiro}, multiplicative derivations $d$ in $\mathcal{M}(\alpha,\beta)$-axial algebras satisfy $d(A_i)\subseteq A_i$ when $\beta\neq\frac{1}{2}$, while this is not necessarily true when $\beta=\frac{1}{2}$. Nevertheless, multiplicative derivations still are additive even in the case $\beta=\frac{1}{2}$.

\bibliographystyle{plain}
\bibliography{bibliografia}

\begin{thebibliography}{10}

\bibitem{1}
A.A. Albert.
\newblock Power-associative rings.
\newblock {\em Transactions of the American Mathematical Society}, page
  552–593, 1948.

\bibitem{2}
R.E. Borcherds.
\newblock Vertex algebras, {K}ac-{M}oody algebras, and the {M}onster.
\newblock {\em Proceedings of the National Academy of Sciences of the United
  States of America}, 83(10):3068–3071, 1986.

\bibitem{Bresaretal2000}
M.~Bre\v{s}ar, L.~Molnár, and P.~\v{S}emrl.
\newblock Elementary operators {II}.
\newblock {\em Acta Scientiarum Mathematicarum (Szeged)}, 66, 2000.

\bibitem{BresarandSemrl1999}
M.~Bre\v{s}ar and P.~\v{S}emrl.
\newblock Elementary operators.
\newblock {\em Proceedings of the Royal Society of Edinburgh, Section A},
  129:1115–1135, 1999.

\bibitem{3}
J.H. Conway.
\newblock A simple construction for the {F}ischer-{G}riess {M}onster group.
\newblock {\em Inventiones Mathematicae}, 79(3):513–540, 1985.

\bibitem{4}
M.N. Daif.
\newblock When is a multiplicative derivation additive?
\newblock {\em International Journal of Mathematics and Mathematical Sciences},
  14(3):615–668, 1991.

\bibitem{6}
B.L.M. Ferreira and R.N. Ferreira.
\newblock Automorphisms on the alternative division ring.
\newblock {\em Rocky Mountain Journal of Mathematics}, 49:73–78, 2019.

\bibitem{7}
B.L.M. Ferreira, A.~Fošner, and G.~Cotrim.
\newblock Additivity of {J}ordan derivations on {J}ordan algebras with
  idempotents.
\newblock {\em Bulletin of the Iranian Mathematical Society}, 48:2779–2788,
  2022.

\bibitem{8}
B.L.M. Ferreira, H.~Guzzo Jr., and R.N. Ferreira.
\newblock An approach between the multiplicative and additive structure of a
  {J}ordan ring.
\newblock {\em Bulletin of the Iranian Mathematical Society}, 47(4):961–975,
  2021.

\bibitem{FerreiraSmiglyBarreiro}
B.L.M. Ferreira, D.A. Smigly, and E.~Barreiro.
\newblock Multiplicative isomorphisms and derivations on axial algebras.
\newblock {\em Journal of Pure and Applied Algebra}, 228(12):107753, 2024.

\bibitem{9}
J.C.M. Ferreira and B.L.M. Ferreira.
\newblock Additivity of n-multiplicative maps on alternative rings.
\newblock {\em Communications in Algebra}, 44(4):1557–1568, 2016.

\bibitem{highwater}
C.~Franchi, M.~Mainardis, and S.~Shpectorov.
\newblock An infinite-dimensional 2-generated primitive axial algebra of
  {M}onster type.
\newblock {\em Annali di Matematica Pura ed Applicata. Series IV},
  201(3):1279--1293, 2022.

\bibitem{12}
I.~Frenkel, J.~Lepowsky, and A.~Meurman.
\newblock {\em Vertex operator algebras and the {M}onster}, volume 134 of {\em
  Pure and Applied Mathematics}.
\newblock Academic Press, Inc., Boston, MA, 1988.

\bibitem{galt}
A.~Galt, V.~Joshi, A.~Mamontov, S.~Shpectorov, and A.~Staroletov.
\newblock Double axes and subalgebras of {M}onster type in {M}atsuo algebras.
\newblock {\em Communications in Algebra}, 49(10):4208--4248, 2021.

\bibitem{13}
J.I. Hall, F.~Rehren, and S.~Shpectorov.
\newblock Universal axial algebras and a theorem of {S}akuma.
\newblock {\em Journal of Algebra}, 421:394–424, 2015.

\bibitem{18}
W.S.~Martindale III.
\newblock When are multiplicative mappings additive?
\newblock {\em Proceedings of the American Mathematical Society}, 21:695–698,
  1969.

\bibitem{14}
A.A. Ivanov.
\newblock {\em The {M}onster group and {M}ajorana involutions}, volume 176 of
  {\em Cambridge Tracts in Mathematics}.
\newblock Cambridge University Press, Cambridge, 2009.

\bibitem{15}
A.A. Ivanov, D.V. Pasechnick, A.~Seress, and S.~Shpectorov.
\newblock Majorana representations of the symmetric group of degree 4.
\newblock {\em Journal of Algebra}, 324:2432–2463, 2010.

\bibitem{16}
N.~Jacobson.
\newblock {\em Structure and representations of {J}ordan algebras}, volume~39
  of {\em Colloquium Publications}.
\newblock American Mathematical Society, 1968.

\bibitem{ji-jordan}
P.~Ji.
\newblock Additivity of {J}ordan maps on {J}ordan algebras.
\newblock {\em Linear Algebra and its Applications}, 431(1-2):179--188, 2009.

\bibitem{jing}
W.~Jing.
\newblock Additivity of {J}ordan elementary maps on rings.
\newblock {\em arXiv:0706.0488v2}, 2007.

\bibitem{17}
R.E. Johnson.
\newblock Rings with unique addition.
\newblock {\em Proceedings of the American Mathematical Society}, 9:57–61,
  1958.

\bibitem{jingui}
P.~Li and W.~Jing.
\newblock Jordan elementary maps on rings.
\newblock {\em Linear Algebra and its Applications}, 382:237–245, 2004.

\bibitem{elementarie}
P.~Li and F.~Lu.
\newblock Additivity of elementary maps on rings.
\newblock {\em Communications in Algebra}, 32(10):3725–3737, 2004.

\bibitem{Sur-les-anneaux-linéaries-1938}
S.~Mazur.
\newblock Sur les anneaux linéaires.
\newblock {\em Comptes rendus de l'Académie des Sciences}, 207:1025--1027,
  1938.

\bibitem{MolnarandSemrl2002}
L.~Molnár and P.~\v{S}emrl.
\newblock Elementary operators on standard operator algebras.
\newblock {\em Linear and Multilinear Algebra}, 50:315–319, 2002.

\bibitem{21}
C.E. Rickart.
\newblock One-to-one mappings of rings and lattices.
\newblock {\em Bulletin of the American Mathematical Society}, 54:758–764,
  1948.

\bibitem{22}
L.~Rowen and Y.~Segev.
\newblock Axes in non-associative algebras.
\newblock {\em Turkish Journal of Mathematics}, 45:2366–2381, 2021.

\bibitem{wang}
Y.~Wang.
\newblock The additivity of multiplicative maps on rings.
\newblock {\em Communications in Algebra}, 37(7):2351--2356, 2009.

\end{thebibliography}

\end{document}